\newcommand{\Ss}{\section}
\newcommand{\GSM}{\cite{M1}}
\newcommand{\LFD}{\cite{M3}}
\newcommand{\DUM}{\cite{M4}}
\newcommand{\factories}{3.2}
\newcommand{\phin}{5.1}
\newcommand{\GCS}{5.2}
\newcommand{\itemsaiphone}{3.1}
\newcommand{\itemsaiphoneB}{3.1}
\newcommand{\gowersnorms}{Proposition 3.1}
\newcommand{\fouriernorm}{Lemma 2.1}
\newcommand{\fouriertransform}{Proposition 2.3}
\newcommand{\phinone}{Lemma 5.5}
\newcommand{\tri}{\Delta}
\newcommand{\tn}[2]{T[#1,#2]}
\newcommand{\tcal}[2]{\Delta(#1 , #2 )}
\newtheorem{theorem}{Theorem}[section]
\newtheorem{thm}[theorem]{Theorem}
\newtheorem{prop}[theorem]{Proposition}
\newtheorem{lemma}[theorem]{Lemma}
\newtheorem{definition}[theorem]{Definition}
\theoremstyle{definition}
\let\oldmarginpar\marginpar
\renewcommand\marginpar[1]{\-\oldmarginpar[\raggedleft\footnotesize #1]%
{\raggedright\footnotesize #1}}
\def\keywords#1{\bigskip \par\noindent{\it Keywords and phrases: }#1\par}
\def\AMS#1{\par\noindent{\it 2010 Mathematics Subject Classification: }#1\par}
\DeclareMathOperator{\R}{\mathbb{R}}
\DeclareMathOperator{\C}{\mathbb{C}}
\DeclareMathOperator{\T}{\mathbb{T}}
\DeclareMathOperator{\N}{\mathbb{N}}
\DeclareMathOperator{\Z}{\mathbb{Z}}
\DeclareMathOperator{\eps}{\epsilon}
\DeclareMathOperator{\ra}{\rightarrow}
\DeclareMathOperator{\half}{\frac{1}{2}}
\title{Higher-order Fourier dimension and frequency decompositions}
\author{Marc Carnovale}
\date{\today}
\begin{document}

\maketitle


\begin{abstract}
This paper continues work begun in \cite{M1}, in which we introduced a theory of Gowers uniformity norms for singular measures on $\R^d$. There, given a $d$-dimensional measure $\mu$,
we introduced a $(k+1)d$-dimensional measure $\tri^k\mu$, and developed a uniformity norm $\|\mu\|_{U^k}$ whose $2^k$-th power is equivalent to $\tri^k\mu([0,1]^{d(k+1)}$. 
In the present work, we introduce a fractal dimension associated to measures $\mu$ which we refer to as the $k$th-order Fourier dimension of $\mu$. This $k$-th order Fourier dimension is 
a normalization of the asymptotic decay rate of the Fourier transform of the measure $\int \tri^k\mu(x;\cdot)\,dx$, and coincides with the classic Fourier dimension in the case that $k=1$.
It provides quantitative control on the size of the $U^k$ norm. The main result of the present paper is that this higher-order Fourier dimension controls the rate at which $\|\mu-\mu_n\|_{U^k}\rightarrow 0$,
where $\mu_n$ is an approximation to the measure $\mu$. This allows us to extract delicate information from the Fourier transform of a measure $\mu$ and the interactions of its frequency
components, which is not available from the $L^p$ norms- or the decay- of the Fourier transform. In future work \cite{M4}, we apply this to obtain a differentiation theorem for singular measures.
\end{abstract}

\tableofcontents

\section*{Acknowledgments}

Many thanks to Prof.'s Izabella Laba and Malabika Pramanik for introducing me to the question of arithmetic progressions in fractional sets which motivated this work, for supporting
me while I did much of the work in this paper,
for the sharing of their expertise in Harmonic Analysis, for funding my master's degree, and much more. 

Thanks to Nishant Chandgotia for his forgiveness of the mess I made of our office for two years and his bountiful friendship.

Thanks to Ed Kroc, Vince Chan, and Kyle Hambrook for the frequent use of their time and ears and their ubiquitous encouragement.


\noindent \keywords{Gowers norms, Uniformity norms, singular measures, Finite point configurations, Salem sets, Hausdorff dimension, Fourier dimension}
\vskip0.2in

\noindent \AMS{28A78, 42A32, 42A38, 42A45, 11B25, 42B10, 42B25}


\section{Introduction}\label{ch:Introduction}

This paper continues work begun in \cite{M1}. There, for any measure $\mu$ on the $d$-dimensional torus we introduced the $k+1$-dimensional measure $\tri^k\mu$, 
a singular analogue of the object $\tri^k f$ relevant in the definition of Gowers' uniformity norms, $\|\cdot\|_{U^k}$, from additive combinatorics. 

In the discrete setting, these uniformity norms provide a notion of pseudorandomness by acting as a measure of the extent to which a function $f$ on, say, $[1,N]$, correlates with $k-1$st degree (phase) polynomials,
and are useful because they encode arithmetical properties of $f$ in the following sense: roughly, it is not difficult to show that appropriate control on the $U^k$ norm of a function guarantees that that function's support must contain many $k+1$-term
arithmetic progressions $a,a+b,\dots,a+kb$, as well as other linear patterns. 

The $U^2$ norm of a function is identical to the $L^4$ norm of that function's Fourier transform. Also connected to the decay of an object's Fourier transform is the notion of
\textit{Fourier dimension} $\dim_{\mathbb{F}}\mu$  of a measure $\mu$ from Geometric Measure Theory. For $\mu$ a measure on $\R^d$, define $\dim_{\mathbb{F}}\mu$ to be 
\begin{align*}
 \dim_{\mathbb{F}}\mu:=\sup\{\beta\in[0,d] : |\widehat{\mu}(\xi)|\lesssim (1+|\xi|)^{-\frac{\beta}{2}}\}
\end{align*}

In \cite{laba}, it was shown that a singular measure on $\R$ with a sufficiently large \textit{Fourier dimension} must in its support contain $3$-term progressions, but the case of
higher-term progressions was left wide-open as it is not amenable to the spectral methods employed there. The purpose of the present paper is to present a higher-order generalization of
the Fourier dimension condition inspired by the relationship enjoyed between the notions of Fourier dimension and the $U^2$ norm, and to develop the technology we need in the forthcoming
paper \cite{M3} in order to exploit this 
higher-order Fourier dimension so as to demonstrate progressions and other linear patterns in $\R^d$.

One interpretation of the present work is the following. Many problems in Harmonic Analysis have appeared to rely crucially on the curvature properties of the set under question, though in
recent years it has been found that these problems have analogues where the notion of curvature no longer seems to be present (\cite{mitsis},\cite{mocken}, \cite{LabaDiff}). Further, several
problems in Additive Combinatorics about the arithmetic structure of sets are known to be determined by the large values of the Fourier transform, which in a continuous context corresponds
to the large values at asymptotic frequencies.
The earlier paper \cite{M1} and the present work together attempt to suggest a new framework in which to attack such problems for sets in $\R^d$.
In particular, the notion of higher-order Fourier decay presented here
requires either cancellation in the Fourier transform of a measure or large sets of small values, to which traditional direct Fourier decay and $L^p$ methods are insensitive. As
an application of this viewpoint, we use the main result of the present paper to prove in \cite{M4} that measures with sufficiently large $k-1$-st order Fourier dimension differentiate (in
the sense of the Hardy-Littlewood differentiation theorem)
$L^p$ for $p'<k$.

\subsection[Review of Gowers norms for singular measures]{Review of \cite{M1}}

In \cite{M1}, we defined the $U^k$ norm for a measure $\mu$ and showed it to be equivalent to
\begin{align}
 \|\mu\|_{U^k} = \tri^k\mu(\T^{k+1})^{\frac{1}{2^k}}
\end{align}
We verified that this does indeed define a norm, and agrees with the norm $\|f\|_{U^k}$ Gowers introduced for functions in \cite{gowers}.

Given $\mu_{\iota}\in U^k$, $\iota\in\{0,1\}^k$, we furhter introduced, for $\bm{\mu}:=\left\{\mu_{\iota}\right\}_{\iota\in\{0,1\}^k}$, the measure on $\T^{k+1}$

\begin{align}
 \tri^k(\bm{\mu}) := \tri^k(\left\{\mu_{\iota}\right\}_{\iota\in\{0,1\}^k})
\end{align}
and showed that it exists whenever each $\mu_{\iota}\in U^k$, and we proved a Gowers-Cauchy-Schwarz Inequality

\begin{align}
  \left|\tri^k(\bm{\mu})(\T^{k+1})\right|:=&
  \left|<\bm{\mu}>\right|\\\leq& \prod_{\iota\in\{0,1\}^k} \|\mu_{\iota}\|_{U^k}
\end{align}

Setting $\bm{\mu}_i = \left\{\mu_{i\iota'}\right\}_{i\iota'\in\{0,1\}^{k-1}}$ for $i=0,1$, we defined $\tri^k(\bm{\mu}_0,\bm{\mu}_1) = \tri^k\bm{\mu}$, and showed that

\begin{align}\label{5}
 \widehat{\tri^{k}(\bm{\mu_0},\bm{\mu_1})}(\xi;\bm{\eta}) = \sum_{c\in\Z^{k-1}}  \widehat{\tri^{k-1}(\bm{\mu_0})}(-\eta_{k};\bm{\eta}'-c) \widehat{\tri^{k-1}(\bm{\mu_1})}(\xi+\eta_{k};c)
\end{align}

Note that if $\mu_{\iota} \equiv \nu$ for each $\iota$, then $ \tri^k(\bm{\mu}) :=\tri^k\nu$ and also that $\|\mu\|_{U^k}^{2^k} = \widehat{\tri^{k}\mu}(0;0)$.

For $k\in\N$, extend the definition of $\|\mu\|_{U^{k+1}}$ to be infinity when $\tri^k\mu$ is undefined. 
Define $U^{k+1}$ to be the space of all finite measures $\mu$ on $\mathbb{T}^d$ for which $\|\left|\mu\right|\|_{U^{k+1}} < \infty$.
Then the following theorem is a rephrasing
of part of Theorem 1.2 from \cite{M1}.

\begin{thm}\label{thm:the following identities}
	Let $\mu$ be a measure on $\mathbb{T}^d$. Then for all $k$, the finite measure $\tri^{k+1}\mu$ exists if and only if $\mu\in U^{k+1}$.

	Further, the following identities hold for all $\mu\in U^{k+1}$
	\begin{align}\label{iphone}
	  &\widehat{\tri^{k+1}\mu}(\xi;\bm{\eta}) = \sum_{\bm{c}\in\Z^k} \widehat{\tri^k\mu}(-\eta_{k+1};\bm{\eta}'-\bm{c}) \widehat{\tri^k\mu}(\xi+\eta_{k+1};\bm{c})\\&\label{ipad}
	  \|\mu\|_{U^{k+1}}^{2^{k+1}}= \sum_{\bm{c}\in\Z^k}|\tri^k\mu(0;\bm{c})|^2
	\end{align}
	
	\end{thm}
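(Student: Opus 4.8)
The plan is to establish both identities by induction on $k$, using the recursive structure of $\tri^k\mu$ together with the already-proven Gowers--Cauchy--Schwarz inequality (which guarantees the relevant measures exist) and formula (\ref{5}) (which controls how the Fourier transform factors through the frequency variable $\eta_{k+1}$). The base case $k=0$ should be essentially a direct computation: $\tri^1\mu$ is a bilinear-type object in $\mu$ whose Fourier transform is a product of copies of $\widehat{\mu}$, and (\ref{iphone}) at this level is the standard Fourier-side description of the $U^2$ inner product, while (\ref{ipad}) becomes $\|\mu\|_{U^1}^{2} = \sum_c |\widehat{\mu}(c)|^2$ (up to the indexing conventions in \cite{M1}). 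The equivalence "$\tri^{k+1}\mu$ exists iff $\mu\in U^{k+1}$" is, per the remark preceding the theorem, a restatement of part of Theorem 1.2 of \cite{M1}, so I would cite it and concentrate on the two displayed identities.

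First I would derive (\ref{iphone}) from (\ref{5}). Formula (\ref{5}) is stated for a tuple $\bm{\mu} = (\bm{\mu}_0, \bm{\mu}_1)$; specializing every $\mu_\iota$ to the same measure $\mu$ collapses $\tri^k(\bm{\mu}_0)$ and $\tri^k(\bm{\mu}_1)$ to $\tri^k\mu$, and the sum over $c\in\Z^{k-1}$ reindexes (after the substitution $\bm{c}\mapsto \bm{\eta}'-\bm{c}$ in one factor) to the sum over $\bm{c}\in\Z^k$ appearing in (\ref{iphone}) once the frequency $\eta_{k+1}$ is incorporated. The one subtlety is bookkeeping: (\ref{5}) as written has $k-1$ "inner" frequency coordinates and passes from level $k-1$ to level $k$, so I need to apply it at level $k+1$ in place of $k$ (i.e. read it with $k$ replaced by $k+1$), which is legitimate since the recursion defining $\tri^{k+1}$ from $\tri^k$ is exactly the $(\bm{\mu}_0,\bm{\mu}_1)$-splitting used to prove (\ref{5}). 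I would also need to justify interchanging the (absolutely convergent) sum with the Fourier transform, which follows from $\mu\in U^{k+1}$ via Gowers--Cauchy--Schwarz applied termwise.

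Next, to get (\ref{ipad}), I would evaluate (\ref{iphone}) at $\xi = 0$ and $\bm{\eta} = 0$, using $\|\mu\|_{U^{k+1}}^{2^{k+1}} = \widehat{\tri^{k+1}\mu}(0;0)$ from the remark in the excerpt. At $\xi = \bm{\eta} = 0$ the right-hand side of (\ref{iphone}) becomes $\sum_{\bm{c}\in\Z^k} \widehat{\tri^k\mu}(0; -\bm{c})\, \widehat{\tri^k\mu}(0; \bm{c})$, and it remains to see that $\widehat{\tri^k\mu}(0;-\bm{c})$ is the complex conjugate of $\widehat{\tri^k\mu}(0;\bm{c})$, which should follow from the reality/symmetry properties of $\tri^k\mu$ established in \cite{M1} (the measure $\tri^k\mu$ being built symmetrically from $\mu$ and its reflection, so that $\overline{\widehat{\tri^k\mu}(\xi;\bm{\eta})} = \widehat{\tri^k\mu}(-\xi;-\bm{\eta})$, or the analogous statement in that paper's conventions). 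Combining these two facts turns the sum into $\sum_{\bm{c}}|\widehat{\tri^k\mu}(0;\bm{c})|^2$, which — matching the notation $\tri^k\mu(0;\bm{c})$ for $\widehat{\tri^k\mu}(0;\bm{c})$ used in the statement — is exactly (\ref{ipad}).

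The main obstacle I anticipate is not conceptual but notational: getting the index shifts in (\ref{5}) to line up exactly with (\ref{iphone}) (the roles of $\bm{\eta}'$, $\eta_{k+1}$, and the sign conventions on $\bm{c}$), and being careful about the precise normalization of $\|\mu\|_{U^1}$ in the base case so that the $2^k$-th-power identities chain correctly. A secondary technical point is the justification of termwise application of Gowers--Cauchy--Schwarz to guarantee that every series in sight converges absolutely whenever $\mu\in U^{k+1}$, so that all the rearrangements are valid; this is where the "exists iff $\mu\in U^{k+1}$" half of the statement does real work.
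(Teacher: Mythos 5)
The paper contains no proof of this theorem: it is explicitly introduced as ``a rephrasing of part of Theorem 1.2 from \cite{M1}'', and no proof environment follows it, so there is nothing internal to compare your argument against and I can only judge it on its own terms. On those terms it is essentially correct, but somewhat over-engineered. Identity (\ref{iphone}) is literally the recalled formula (\ref{5}) read at level $k+1$ with every $\mu_{\iota}$ equal to $\mu$; no reindexing $\bm{c}\mapsto\bm{\eta}'-\bm{c}$ is required, and no induction either, so the inductive scaffolding is superfluous and its base case is slightly off ((\ref{ipad}) at $k=0$ reads $\|\mu\|_{U^1}^2=|\widehat{\mu}(0)|^2$, a single term over $\Z^0$, not $\sum_c|\widehat{\mu}(c)|^2$, which is the $L^2$ norm). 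Identity (\ref{ipad}) then follows exactly as you say: set $\xi=0$, $\bm{\eta}=0$ in (\ref{iphone}), use $\|\mu\|_{U^{k+1}}^{2^{k+1}}=\widehat{\tri^{k+1}\mu}(0;0)$ from the surrounding discussion, and use $\widehat{\tri^k\mu}(0;-\bm{c})=\overline{\widehat{\tri^k\mu}(0;\bm{c})}$, which holds because $\tri^k\mu$ is a real measure. The existence equivalence, the validity of (\ref{5}) at level $k+1$, and the absolute convergence justifying the manipulations cannot be extracted from anything in the present paper and must simply be cited from \cite{M1}, as you do; so what you have is less a proof than a verification that the displayed identities are consistent with the facts recalled from \cite{M1} --- which is all the present paper does as well.
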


For definitions, the reader should refer to \cite{M1}.

In this paper, we continue our study of measures with finite $U^k$ norm, and single out a particularly nice geometric class of such measures: those with what we term here positive $k-1$st order
Fourier dimension. 

Although control over $U^k$ norms suffices to present progressions in the discrete context, in the continuous setting more quantitative control is necessary. 
For $3$-term progressions, this is naturally provided by the Fourier dimension of the measure $\mu$. Indeed, since it is the $U^2$ norm which controls 3-AP's
and $\|\mu\|_{U^2}^4 = \sum_{\xi\in\Z} |\hat{\mu}(\xi)|^4$, the decay rate of $\hat{\mu}$ provides finer information than the $U^2$ norm.

However, this Fourier dimension will not suffice to obtain sufficient quantitative control in the case of higher-term patterns. Since 
$\|f\|_{U^3} = \sum_{\bm{\eta}} |\widehat{\tri^2 f}(0;\eta_1,\eta_2)|^2$, and one may compute (or use Theorem \ref{thm:the following identities} to find) that 
$\widehat{\tri^2 f}(0;\bm{\eta}) = \sum_c \hat{f}(\eta_1-c)\hat{f}(\eta_1-\eta_2 +c) \hat{f}(c)\hat{f}(\eta_2-c)$, if no information beyond that $|\hat{f}(\bm{\eta})|\leq |\bm{\eta}|^{-\frac{\beta}{2}}$ 
with $\beta<1$ is hypothesized, then we just fall short of obtaining useful decay information for $\widehat{\tri^2 f}(0;\bm{\eta})$ and obtain no information whatsoever about $\widehat{\tri^3 f}$.

Instead, since 
\begin{align}
 \|\mu\|_{U^k} = \sum_{\bm{\eta}\in\Z^{k-1}} |\widehat{\tri^{k-1}\mu}(0;\bm{\eta})|^2
\end{align}
preserving the relationship between the uniformity norm and Fourier dimension we make the following definition 

\vspace{5pt}
\hrulefill
\vspace{2pt}
\begin{definition}
 \emph{For $k>1$, we define the $k$th-order Fourier dimension of a measure $\mu$ on $\R^d$ to be the supremum over all $\beta\in(0,d)$ for which}
 \begin{align}
|\widehat{\tri^i\mu}(0;\bm{\eta})|\lesssim(1+|\bm{\eta}|)^{-\frac{i+1}{2}\beta}
 \end{align}
\emph{ for all $i\leq k$.}
 
\emph{If $\mu$ is a measure with nontrivial compact support on $\mathbb{T}^d$, then we identify it with a measure on $\R^d$ in the natural way in order to define its higher-order Fourier dimension.}

\emph{We further say that the measure $\mu$ possesses a $k$th order Fourier decay of $\beta$ if for all $i\leq k$,}
\begin{align}
|\widehat{\tri^i\mu}(0;\bm{\eta})|\leq C(1+|\bm{\eta}|)^{\frac{i+1}{2}\beta}
\end{align}
 \end{definition}
 
\vspace{2pt}
\hrulefill
\vspace{5pt}

Note that since $\widehat{\tri^k\mu}$ can be computed purely in terms of $\widehat{\mu}$ by Proposition \ref{thm:the following identities}, we need not assume that $\tri^k\mu$ exist in 
order to define the $k$-th order Fourier dimension. However, 
using the methods of \cite{M1},
it is not hard to show that positive $k$th order Fourier dimension implies the
existence of $\tri^k\mu$.

The statement that $\mu$ have a $k$-th order Fourier dimension of $\beta < d$ is just the statement that the measure $\tri^k\mu$ on $\R^{d(k+1)}$ have a classical Fourier dimension of
$(k+1)\beta < (k+1)d$. If in analogy with the additive combinatorial $\tri^k f$ we think of $d\tri^k\mu(x;u)$ as telling us about the size of the intersection of the measure $\mu$  with the shifts $\mu^{\iota\cdot u}$, $\iota\in\{0,1\}^k$ of 
itself  around the point $x$, then an assumption of higher order Fourier dimension can be thought of as the assertion that distribution of where these shifts are largest is fairly 
``dispersed'', in that it doesn't correlate too strongly with any high frequency $e^{-2\pi i \bm{\eta}\cdot}$. So while classical Fourier dimension tells us that the mass of a measure is 
in some sense distributed ``fairly,'', higher order Fourier dimension tells us that the set of distances between areas of large density are themselves distributed evenly, as well as as the distances between these distances, and so on. 
From this perspective, it is the appropriate generalization of the relationship between the $U^2$-norm and the linear Fourier dimension. 

One might ask whether such decay assumptions are possible aside from the trivial case of Lebesgue measure. 
In future work, we pursue an affirmative answer.


Higher-order Fourier dimension gives us quantitative control that the $U^k$ norm does not in the following sense. 

Let $\phi_n$ be an approximate identity with Fourier transform $\widehat{\phi_n}$ essentially supported in the ball $B(0,2^{n+1})$.

Further, set $\mu_n = \phi_n\ast\mu$.

In this paper, we seek to show that the $k$-th order Fourier dimension of $\mu$ gives control on the size of 
\begin{align}\label{fiffy}
 \|\mu-\mu_n\|_{U^{k+1}}
\end{align}
and in fact provides a convergence rate depending on this Fourier dimension so that (\ref{fiffy}) is summable in $n$.

Inspired by Theorem \ref{thm:the following identities}, we introduce the decomposition

\begin{align}
\|\mu\|_{U^{k+1}}^{2^{k+1}} =& \|\mu\|_{U^{k+1},<N}^{2^{k+1}} + \|\mu\|_{U^{k+1},\geq N}^{2^{k+1}}\\
 \|\mu\|_{U^{k+1},<N} :=& \left(\sum_{\bm{\eta}\in\Z^k} \widehat{\phi_N^{[k]}}(0;\bm{\eta}) |\widehat{\tri^k\mu}(0;\bm{\eta})|^2\right)^{\frac{1}{2^{k+1}}}
 \\ \|\mu\|_{U^{k+1},\geq N} :=& \left( \|\mu\|_{U^{k+1}}^{2^{k+1}} -\|\mu\|_{U^{k+1},<N}^{2^{k+1}}\right)^{\frac{1}{2^{k+1}}}
\end{align}

The work in this paper comes down to controlling these expressions.


\subsection{Results}


\subsubsection{Outline}
Let  $k\in\N$, $\mu$ be a measure on $\mathbb{T}^d$, $(\phi_n)$ an approximate identity, $\mu_n:=\phi_n^{\ast^{k}}\ast\mu$ be $k$-copies of $\phi_n$ convolved with $\mu$.

In Section \ref{ch:Norm Decomposition}, we describe the main decomposition and tool used in this paper; namely, a splitting of the norm $\|\mu\|_{U^k}$ into the part coming from
the low frequencies of $\tri^{k-1}\mu$, $\|\mu\|_{U^k,\leq N}$ and the part coming from the high frequencies of $\tri^{k-1}\mu$, $\|\mu\|_{U^k,>N}$, where $N$ is some large parameter. 
We also obtain a ``monotonicity result'', Lemma \ref{thm:smalltri}, which bounds  $\|\mu_n\|_{U^k,>N}-\|\mu\|_{U^k,>N}$ by $\|\mu\|_{U^k,\leq N } -\|\mu_n\|_{U^k,\leq N}$, and which we
will need in order to leverage control on $\|\mu\|_{U^k,>N}$, and $\|\mu-\mu_n\|_{U^k,\leq N}$  into control on $\|\mu_n\|_{U^k,>N}$ (under a higher-order Fourier decay assumption on $\mu$) when
we prove the main result in the following section.

The main result of this paper is  Proposition \ref{thm:rk}, which, supposing that the measure $\mu$ on $\R^d$ possesses a $k$-th order Fourier dimension close enough to $d$
,  gives  a bound on $\|\mu-\mu_n\|_{U^{k+1}}$ which decays  exponentially in $n$. It is in Section \ref{ch:lemmas} that we put together the pieces from the rest of the paper
in order to obtain this result.

The easy ingredient in the main result occurs in Section \ref{ch:Higher-order Fourier Dimension}, where we show how to use control on $\|\mu\|_{U^k}$ in order to obtain control on $\|\mu\|_{U^{k+1},\leq m}$ (Lemma \ref{thm:start}). 
This allows for the induction used to prove the main result in the previous section. We also formally define the concept of higher-order Fourier dimension here.

In Section \ref{ch:FrequencyRestricted}, we establish, via an analogue of a Gowers-Cauchy-Schwarz Inequality, an analogue of a triangle inequality for $\|\mu\|_{U^k,>N}$, of the form
\begin{align*} 
\|f+g\|_{U^k,>N}\leq C\max(\|f\|_{U^k},\|g\|_{U^k})\max(\|f\|_{U^k,>N},\|g\|_{U^k,>N})
\end{align*}

This is the important final piece to proving the main result; when combined with Lemma \ref{thm:smalltri} and Lemma \ref{thm:start}, and an assumption of higher-order Fourier decay,
Proposition \ref{thm:rk} follows without much work. This is also the difficult part of the proof.

In Section \ref{ch:Truncations}, we prove the various identities necessary to derive the results of Section \ref{ch:FrequencyRestricted}.

The primary ingredient in obtaining the results of Section \ref{ch:FrequencyRestricted}, however, comes from Section \ref{ch:Spatial}, where we perform the Fourier transform calculation
needed to represent one of the key objects in Setion \ref{ch:Truncations} on the spatial side, which we need in order to obtain bounds on it in Section \ref{ch:FrequencyRestricted}.

\subsubsection{Overview of the Approach}
The main result of this paper is  Proposition \ref{thm:rk}, which, given a measure $\mu$ on $\R^d$ of $k$-th order Fourier dimension close enough to $d$ and setting $\mu_n=\phi_n\ast\mu$ for 
an approximate identity $\phi_n$,  gives  a bound on $\|\mu-\mu_n\|_{U^{k+1}}$ which decays  exponentially in $n$.

We prove Proposition \ref{thm:rk} by induction on $k$. When $k=2$, it holds immediately by the identity $\|\mu\|_{U^2} = \|\widehat{\mu}\|_{L^4}$. 

According to Theorem \ref{thm:the following identities}, we have that
$\|g\|_{U^{k}}^{2^{k}} = \widehat{\tri^k g}(0;0)$ and $\|g\|_{U^{k+1}}^{2^{k+1}}= \sum_{\bm{\eta}\in\Z^{k}} |\widehat{\tri^k g}(0;\bm{\eta})|^2$. Assuming Propostion \ref{thm:rk} holds for some $k$, we have that $\widehat{\tri^k (\mu-\mu_n)}(0;0) = \|\mu-\mu_n\|_{U^{k}}^{2^k}$ is exponentially small in $n$;
since for all $\bm{\eta}$,
$|\widehat{\tri^k (\mu-\mu_n)}(0;\bm{\eta})|\leq \widehat{\tri^k (\mu-\mu_n)}(0;0)$ by Lemma \ref{thm:start}, this says that 

\begin{align}\label{Fact 1}
 \|\mu-\mu_n\|_{U^{k+1},\leq 2^m}^{2^{k+1}} :\approx\sum_{\bm{\eta}\in\Z^{k},|\bm{\eta}|_{\infty}\leq 2^m} |\widehat{\tri^ k (\mu-\mu_n)}(0;\bm{\eta})|^2\leq 2^m (\widehat{\tri^k (\mu-\mu_n)}(0;0))^2
\end{align}
is exponentially small in $n$ (for small enough $m$ which depends on $n$). (Fact 1)

So Proposition \ref{thm:rk} follows from getting an exponentially decaying bound on

\begin{align}
 \|\mu-\mu_n\|_{U^{k+1},>2^m}^{2^{k+1}} :\approx\sum_{\bm{\eta}\in\Z^{k},|\bm{\eta}|_{\infty} >2^m} |\widehat{\tri^ k (\mu-\mu_n)}(0;\bm{\eta})|^2
\end{align}
for some not too-large $m$.

The assumption of $k$-th order Fourier dimension guarantees that $\|\mu\|_{U^{k+1},>2^m}$ is exponentially decaying in $m$. We may also show, based on Fact 1 above, that
$\|\mu_n\|_{U^{k+1},>2^m}$ is exponentially decaying in $m$ (think of Fact 1 as the statement that $\mu$ is about $\mu_n$ as far as $\|\cdot\|_{U^{k+1},\leq 2^m}$ is concerned;
then a sort of ``monotonicity'' result (Lemma \ref{thm:smalltri}) tells us that $\mu_n$ is similarly about the same as $\mu$ as far as $\|\cdot\|_{U^{k+1},>2^m}$ is concerned.)

Modulu an appropriate choice of $m$, what is left is to combine the exponential decay of $\|\mu\|_{U^{k+1},>2^m}$ and $\|\mu_n\|_{U^{k+1},>2^m}$ into exponential decay of $\|\mu-\mu_n\|_{U^{k+1},>2^m}$. 
This is asking for a sort of triangle inequality for $\|\cdot\|_{U^{k+1},>2^m}$, but unfortunately $\|\cdot\|_{U^{k+1},>2^m}$ is not a norm and a direct triangle inequality is not a available.
Instead, we have Proposition \ref{thm:reltri}, a weighted triangle inequality. As in the case of the triangle inequality for $\|\cdot\|_{U^{k+1}}$, this follows from expanding out the expression
for $\|a+b\|_{U^{k+1},>2^m}$ into a sum of products of terms involving $a$'s and $b$'s, and applying a sort of Gowers-Cauchy-Schwarz to each such product (Lemma \ref{thm:highcross}).

The proof of Lemma \ref{thm:highcross} is a straightforward sequence of calculations on the Fourier side, together with a bound on the physical side. In detail, we want to take 
$\sum_{|\bm{\eta}|>2^m} |\widehat{\tri^k (f_1,f_2)}(0;\bm{\eta})|^2$, $f_i\in \{a,b\}^{2^{k-1}}$, and bound it by an expression with half as many cross terms,
say  
\begin{align}
       C(\sum_{|\bm{\eta}|>2^m} |\widehat{\tri^kf_1,f_1}|^2)^{\frac{1}{2}}(\sum_{|\bm{\eta}|>2^m} |\widehat{\tri^kf_2,f_2}|^2)^{\half}
      \end{align}
 This is not something we know to do, but we do show the bound
$C (\sum_{|\bm{\eta}|>2^m} |\widehat{f_1,f_1}|^2)^{\frac{1}{4}}$, and rearranging $(f_1,f_1)$ into the form $(f_1',f_2')$ , where $f_i'\in\{a,b\}^{2^{k-1}}$ has half as many cross terms 
as either of the original $f_i$, (which doesn't affect the sum, by 
Lemma \ref{thm:permute}) and iterating yields Lemma \ref{thm:highcross}.

The argument to obtain the bound of $C (\sum_{|\bm{\eta}|>2^m} |\widehat{f_1,f_1}|^2)^{\frac{1}{4}}$  is: writing 
\begin{align}
 \widehat{\tri^{k+1}_{s_j>0}(f_1,f_2)}(0;0):\approx \sum_{\bm{\eta}\in\Z^k, |\eta_j|>2^m} |\widehat{\tri^k (f_1,f_2)}(0;\bm{\eta})|^2
\end{align}
we have

\begin{align}
 \sum_{|\eta_j|>2^m} |\widehat{\tri^k (f_1,f_2)}(0;\bm{\eta})|^2 = \sum_{\bm{\eta}\in\Z^k} \widehat{\tri^k_{s_j>N} (f_1,f_1)}(0;\bm{\eta})\widehat{\tri^k (f_2,f_2)}(0;\bm{\eta})
\end{align}
(Lemma \ref{thm:CS11}), which by Cauchy-Schwarz is bounded by 
\begin{align}
 C[ \sum_{\bm{\eta}\in\Z^k} |\widehat{\tri^k_{s_j>2^m} (f_1,f_1)}(0;\bm{\eta})|^2]^{\half}
\end{align}
which by Lemma \ref{thm:CS12} is 

\begin{align}
 C[\sum_{|\eta_j|>2^m} \widehat{\tri^k_{s_j>N,s_j+\eta_j>2^M} (f_1,f_1)}(0;\bm{\eta})\widehat{\tri^k (f_2,f_2)}(0;\bm{\eta})]^{\half}
\end{align}
where $\widehat{\tri^k_{s_j>2^m,s_j+\eta_j>2^m} (f_1,f_1)}(0;\bm{\eta})$ is given by the same sum as $\widehat{\tri^k_{s_j>N} (f_1,f_1)}(0;\bm{\eta})$,
but restricting the sum to be only over those $c_j$ so that $|c_j+\eta_j|>2^m$. 

Applying Cauchy-Schwarz to this gives us

\begin{align}
 C\bigg[[\sum_{|\eta_j|>2^m} |\widehat{\tri^k (f_2,f_2)}(0;\bm{\eta})|^2\sum_{|\eta_j|>2^m} |\widehat{\tri^k_{s_j>N,s_j+\eta_j>2^M} (f_1,f_1)}(0;\bm{\eta})|^2]^{\half}\bigg]^{\half}
\end{align}
and the first sum here is of the same form as what we started with, except that it will have either twice as many $a'$s and half as many $b$'s as what we started with, or the reverse (which is what we wanted so that after finitely many applications of this process, we end up with $\|a\|_{U^{k+1},>2^m}^{2^{k+1}}$ or $\|b\|_{U^{k+1},>2^m}^{2^{k+1}}$. 
So Lemma \ref{thm:highcross} is reduced to showing that the second sum above is bounded (actually, it likely should exhibit some decay, but we do not know how to take advantage of this). We bound the second sum by

\begin{align}
 \sum_{} |\widehat{\tri^k_{s_j>N,s_j+\eta_j>2^M} (f_1,f_1)}(0;\bm{\eta})|^2
\end{align}
Lemma \ref{thm:overgrowth} allows us to write this sum in physical space. 
If we set aside the technicalities owing to the fact that the $a$ and $b$ may be measures, let $\widehat{\psi}\approx 1_{|\cdot|>2^m}$, and write $f_1=(g_1,g_2)$, $g_i\in\{a,b\}^{k-2}$, 
we would have that the sum is the same as

\begin{align}
 &\int  \tri^{k-2}g_1(x;\bm{u}'') \tri^{k-2}g_2(x-(u_{k-1}-t-a);\bm{u}'')\tri^{k-2}g_1(x-u_{k};\bm{u}'') \tri^{k-2}g_2(x-(u_{k-1}-t-b)-u_k;\bm{u}'')\\\cdot&
 \tri^{k-2}g_1(x;\bm{u}'') \tri^{k-2}g_2(x-(u_{k-1}-a-t;\bm{u}'')\tri^{k-2}g_1(x-u_{k};\bm{u}'') \tri^{k-2}g_2(x-(u_{k-1}-a-s)-u_k;\bm{u}'')\\\cdot&
 \psi(a)\psi(b)\psi(s)\psi(t)\,dt\,ds\,da\,db\,dx\,du
\end{align}

First bounding this by replacing the $\psi$'s by $|\psi|$'s, by applying Cauchy-Schwarz we may disentangle the $|\phi(t)|$ so that instead of hitting both the $u_{k-1}$'s on the top line and only one of the $u_{k-1}$'s
on the bottom line, it hits all of the $u_{k-1}$'s, and so may be integrated out, and similarly the effect of the other $|\psi|$'s may be removed. This is the point of Lemma \ref{thm:overgrowth}, but the proof is
greatly complicated by the fact that we are dealing with measures, and so must take care not only to smooth everything with mollifiers, but also that when we take limits after the application of Cauchy-Schwarz,
we reassemble the measures $\tri^{k-2} g_1\cdots \tri^{k-2}g_2$ into $\tri^k (g_1,g_2)$ (actually, in this lemma, we combine the result following Cauchy-Schwarz 
into $\tri^k (g_1,g_1)\tri^k (g_2,g_2)$).



\Ss{A Decomposition of the Norm}
\label{ch:Norm Decomposition}

Throughout the rest of this section, fix $\phi=(\phi_n)_{n\in\N}$, an approximate identity on $\mathbb{T}^d$ such that $\phi_n$  has compact support on the Fourier side for each $n\in\N$,
and such that $\widehat{\phi_n}\approx 1_{B(0,2^n)}$.
Further, set $\phi_n^c = 1-\phi_n$.

For $\mu\in U^k$ and $N\in\N$, define

\begin{align*}&
 \|\mu\|_{U^k,\leq N} = \left(\sum_{\bm{\eta}\in\Z^{k}} |\widehat{\phi_N^{[k]}}(0;\bm{\eta})|^2|\widehat{\tri^k\mu}(0;\bm{\eta})|^2\right)^{\frac{1}{2^k}}\\&
 \|\mu\|_{U^k,> N} = \left(\sum_{\bm{\eta}\in\Z^{k}} |(1-\widehat{\phi_N^{[k]}})(0;\bm{\eta})|^2|\widehat{\tri^k\mu}(0;\bm{\eta})|^2\right) ^{\frac{1}{2^k} }
\end{align*}

The first inequality of Corollary \phin applied to $\mu_{\iota}\equiv\mu$ tells us that if we truncate the high frequencies of $\mu$ by convolving with $\phi_n^{\ast^k}$ to obtain $\mu_n$, 
the resulting measure $\tri^{k+1}\mu_n$
cannot have greater Fourier mass than the truncation to low frequencies of $\tri^{k+1}\mu$.

The following tells us that any surplus mass exhibited by  $\tri^k\mu_n$ restricted to an annlus in frequency space,
is bounded by the deficit of the Fourier mass of $\tri^k\mu_n$ on the ball that annulus surrounds.

\begin{lemma}\label{thm:smalltri}
 Let $\mu$ belong to $U^{k+1}$. If for some $\eps$, $\|\mu\|_{U^{k+1}, <m}^{2^{k+1}} =\|(\ast^{k+1} \phi_n)\ast\mu\|_{U^{k+1}, <m}^{2^{k+1}} + \eps$, then
 \begin{align}
 \|\mu\|_{U^{k+1},>m}^{2^{k+1}}  \geq \|(\ast^{k+1}\phi_n)\ast\mu\|_{U^{k+1},> m}^{2^{k+1}}-\eps
 \end{align}

\end{lemma}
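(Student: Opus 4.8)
The plan is to exploit the fact that $\|\mu\|_{U^{k+1}}^{2^{k+1}}$ and $\|\mu_n\|_{U^{k+1}}^{2^{k+1}}$ (writing $\mu_n := (\ast^{k+1}\phi_n)\ast\mu$) decompose cleanly into low-frequency and high-frequency parts, together with a monotonicity of the \emph{total} norm under the convolution $\mu\mapsto\mu_n$. First I would recall from Theorem \ref{thm:the following identities} that
\begin{align*}
\|\mu\|_{U^{k+1}}^{2^{k+1}} = \sum_{\bm{\eta}\in\Z^k}|\widehat{\tri^k\mu}(0;\bm{\eta})|^2,
\end{align*}
and that, since $\widehat{\tri^k\mu_n}$ is obtained from $\widehat{\tri^k\mu}$ by multiplication by the (nonnegative, $\leq 1$) factor coming from the $2^k$ convolutions with $\phi_n$ — more precisely $\widehat{\tri^k\mu_n}(0;\bm{\eta}) = \widehat{\phi_n^{[k]}}(0;\bm{\eta})\,\widehat{\tri^k\mu}(0;\bm{\eta})$ up to the usual $\approx$ normalization — we have $|\widehat{\tri^k\mu_n}(0;\bm{\eta})| \leq |\widehat{\tri^k\mu}(0;\bm{\eta})|$ for every $\bm{\eta}$. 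This is essentially the content of ``the first inequality of Corollary \phin'' applied with $\mu_\iota\equiv\mu$, which the preceding paragraph in the excerpt already invokes; I would cite that to get
\begin{align*}
\|\mu_n\|_{U^{k+1}}^{2^{k+1}} \leq \|\mu\|_{U^{k+1}}^{2^{k+1}}.
\end{align*}

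The second step is purely algebraic. By definition $\|\mu\|_{U^{k+1}}^{2^{k+1}} = \|\mu\|_{U^{k+1},<m}^{2^{k+1}} + \|\mu\|_{U^{k+1},>m}^{2^{k+1}}$ and likewise for $\mu_n$ (the low part uses the weight $|\widehat{\phi_m^{[k]}}(0;\bm{\eta})|^2$, the high part its complement, and the two weights sum to $1$). Rearranging,
\begin{align*}
\|\mu\|_{U^{k+1},>m}^{2^{k+1}} - \|\mu_n\|_{U^{k+1},>m}^{2^{k+1}}
= \big(\|\mu\|_{U^{k+1}}^{2^{k+1}} - \|\mu_n\|_{U^{k+1}}^{2^{k+1}}\big)
- \big(\|\mu\|_{U^{k+1},<m}^{2^{k+1}} - \|\mu_n\|_{U^{k+1},<m}^{2^{k+1}}\big).
\end{align*}
The first bracket on the right is $\geq 0$ by the monotonicity just established; the second bracket equals $\eps$ by hypothesis. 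Hence the left side is $\geq -\eps$, which is exactly the claimed inequality $\|\mu\|_{U^{k+1},>m}^{2^{k+1}} \geq \|\mu_n\|_{U^{k+1},>m}^{2^{k+1}} - \eps$.

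The only real obstacle is making precise the relationship $\widehat{\tri^k\mu_n} = \widehat{\phi_n^{[k]}}\cdot\widehat{\tri^k\mu}$ and the nonnegativity/boundedness of the multiplier $\widehat{\phi_n^{[k]}}(0;\cdot)$, together with the attendant $\approx$-versus-$=$ bookkeeping (the paper uses $\widehat{\phi_n}\approx 1_{B(0,2^n)}$ rather than an exact cutoff, and the low/high weights are squares of these). I would handle this by invoking the identity \eqref{iphone} iteratively to track how each of the $2^k$ mollifications acts on $\widehat{\tri^k\mu}$, or more economically by citing Corollary \phin directly for the total-norm monotonicity and citing the definition of $\phi_n^{[k]}$ for the fact that the low- and high-frequency weights form a partition of unity; no genuinely new estimate is needed beyond what is already assembled in Section \ref{ch:Norm Decomposition}. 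Everything else is the one-line rearrangement above.
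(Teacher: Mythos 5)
Your proposal is correct and follows essentially the same route as the paper: the key input is the total-norm monotonicity $\|(\ast^{k+1}\phi_n)\ast\mu\|_{U^{k+1}}^{2^{k+1}}\leq\|\mu\|_{U^{k+1}}^{2^{k+1}}$ obtained from Corollary 5.1 (the paper passes through Plancherel to identify the intermediate quantity with $\|\mu\|_{U^{k+1},<n}^{2^{k+1}}$), after which the claim is the same one-line rearrangement of the low/high decomposition that you give.
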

\begin{proof}
Suppose $\mu$, $\eps$ as above.

 We may write 
 \begin{align}\label{sewe1}
  \|\mu\|_{U^k,\leq m}^{2^{k+1}}  + \|\mu\|_{U^{k+1},> m}^{2^{k+1}} = \|\mu\|_{U^{k+1}}^{2^{k+1}} 
 \end{align}
 By Corollary \phin, 
 \begin{align}
\|(\ast^{k+1}\phi_n)\ast\mu\|_{U^{k+1}}^{2^{k+1}} \leq \int |\int \phi_n^{[k]}\ast\tri^k\mu(x;u)\,dx|^2\,du
\end{align}
which using Plancherel, we write on the Fourier side as
\begin{align}
 \sum_{\bm{\eta}\in\Z^{k}} |\widehat{\phi_n^{[k]}}(0;\bm{\eta})|^2|\widehat{\tri^k\mu}(0;\bm{\eta})|^2=\|\mu\|_{U^{k+1},<n}^{2^{k+1}}\leq \|\mu\|_{U^{k+1}}^{2^{k+1}}
\end{align}
So
\begin{align}\label{sewe2}
 (\ref{sewe1})\geq \|(\ast^{k+1}\phi_n)\ast\mu\|_{U^{k+1}}^{2^{k+1}} 
 \end{align}

 At the same time,
\begin{align}\label{sewe3}
 \|(\ast^{k+1}\phi_n)\ast\mu\|_{U^{k+1}}^{2^{k+1}} =& \|(\ast^{k+1}\phi_n)\ast\mu\|_{U^{k+1},\leq m }^{2^{k+1}} + \|(\ast^{k+1}\phi_n)\ast\mu\|_{U^{k+1},>m }^{2^{k+1}}\\\notag{}=&
 \|\mu\|_{U^{k+1},\leq m }^{2^{k+1}} - \eps+ \|(\ast^{k+1}\phi_n)\ast\mu\|_{U^{k+1},>m }^{2^{k+1}}
 \end{align}
by hypothesis.

Using (\ref{sewe2}), which states that $(\ref{sewe1})\geq (\ref{sewe3})$, we have shown that

\begin{align}
 \|\mu\|_{U^{k+1},\leq m }^{2^{k+1}}+  \|\mu\|_{U^{k+1},>m }^{2^{k+1}}\geq \|\mu\|_{U^{k+1},\leq m }^{2^{k+1}} - \eps+ \|(\ast^{k+1}\phi_n)\ast\mu\|_{U^{k+1},>m }^{2^{k+1}}
\end{align}
and so we are done.
\end{proof}


\Ss{The main theorem}
\label{ch:lemmas}

Throughout this section, $\mu$ will refer to a measure in $U^k(\T)$ of $k$-th order Fourier decay of $\beta$, so that
$|\widehat{\tri^k\mu}(\zeta)|\leq C_{\mu}|\zeta|^{-\frac{k+1}{2}\beta}$,
$\phi_n=2^{n}\phi(2^n\cdot)$ for $\phi$ such that $(\phi_n)$ forms an approximate identity on $\mathbb{T}^d$, 
and $\mu_n:=(\ast^k \phi_n)\ast\mu$. Constants will be assumed to depend on $\C_{\mu}$ and $\phi_n$ unless stated otherwise.

Here we develop the quantitative estimate which shows how one may put to use an assumption of higher-order Fourier decay to control the terms in a decomposition of the $U^k$ norm, 
and in particular how determine how quickly $\mu_n\ra\mu\in U^k$. This is naturally useful when dealing with measures with good higher-order Fourier dimensions, and will be
the primary ingredient in \LFD and \DUM.

\begin{prop}\label{thm:rk}
 With $\mu$ and hypotheses as above, and setting
 \begin{align}
  r_k:= \bigg(\prod_{j=3}^k \left[2-\frac{{2^{3j-2}}}{2^{3j-2}-[1-\frac{(j+1)\beta}{jd}]}\right]\bigg) (2\beta-d)
 \end{align}
we have the bound

\begin{align}
 \|\mu-\mu_n\|_{U^k}\leq C 2^{-\frac{r_k}{2^k}n}
\end{align}
where the constant depends only on the choice of $\phi_n$, and the  constant $C_F$ appearing in 
\begin{align}
 |\widehat{\tri^j\mu}(0;\eta)|\leq C_F |\eta|^{-(j+1)\frac{\beta}{2}}, \hspace{5pt} j=1,\dots, k
\end{align}

\end{prop}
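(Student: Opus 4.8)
The plan is to prove Proposition \ref{thm:rk} by induction on $k$. For the base case $k=2$ one has $r_2=2\beta-d$ (the product is empty), and the identity $\|\nu\|_{U^2}^{4}=\sum_{\xi\in\Z^d}|\widehat{\nu}(\xi)|^{4}$ applied to $\nu=\mu-\mu_n$ does the job: since $\widehat{\phi_n}\approx 1_{B(0,2^n)}$, the measures $\mu$ and $\mu_n$ agree, up to a harmless mollification error, on frequencies below $2^n$, so the sum collapses to the tail $\sum_{|\xi|\gtrsim 2^n}|\widehat{\mu}(\xi)|^{4}$, which by the first-order decay $|\widehat{\mu}(\xi)|\le C_F|\xi|^{-\beta/2}$ is at most $C\sum_{|\xi|>2^n}|\xi|^{-2\beta}\le C\,2^{-(2\beta-d)n}$ --- the series converging precisely because $2\beta>d$. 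Taking fourth roots gives $\|\mu-\mu_n\|_{U^2}\le C\,2^{-\frac{2\beta-d}{4}n}$, which is the claim.

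For the inductive step, assume the estimate at level $k$ and use Theorem \ref{thm:the following identities} to write, for a frequency cutoff $N=N(n)$ to be fixed at the end,
\begin{align*}
\|\mu-\mu_n\|_{U^{k+1}}^{2^{k+1}}=\sum_{\bm{\eta}\in\Z^{dk}}\bigl|\widehat{\tri^k(\mu-\mu_n)}(0;\bm{\eta})\bigr|^{2}=\|\mu-\mu_n\|_{U^{k+1},\le N}^{2^{k+1}}+\|\mu-\mu_n\|_{U^{k+1},>N}^{2^{k+1}}.
\end{align*}
The low-frequency part is the easy half: Lemma \ref{thm:start} gives the pointwise bound $|\widehat{\tri^k(\mu-\mu_n)}(0;\bm{\eta})|\le\widehat{\tri^k(\mu-\mu_n)}(0;0)=\|\mu-\mu_n\|_{U^k}^{2^k}$, which by the inductive hypothesis (the extra copy of $\phi_n$ in $\mu_n$ only improving matters, as $\widehat{\phi_n}\approx1$ there) is at most $C\,2^{-r_kn}$; summing over the $\asymp N^{dk}$ frequencies with $|\bm{\eta}|\le N$ then yields $\|\mu-\mu_n\|_{U^{k+1},\le N}^{2^{k+1}}\le C\,N^{dk}\,2^{-2r_kn}$.

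The high-frequency part carries the weight of the argument. I would apply the weighted triangle inequality, Proposition \ref{thm:reltri}, to $\mu$ and $-\mu_n$, which --- with the constants it produces, coming from the iterated Gowers--Cauchy--Schwarz of Lemma \ref{thm:highcross} and the measure-valued physical-space bound of Lemma \ref{thm:overgrowth} --- controls $\|\mu-\mu_n\|_{U^{k+1},>N}$ in terms of $\max(\|\mu\|_{U^{k+1}},\|\mu_n\|_{U^{k+1}})$, which is $O(1)$, and $\max(\|\mu\|_{U^{k+1},>N},\|\mu_n\|_{U^{k+1},>N})$. For $\|\mu\|_{U^{k+1},>N}$, the $k$-th order decay $|\widehat{\tri^k\mu}(0;\bm{\eta})|\le C_F|\bm{\eta}|^{-\frac{k+1}{2}\beta}$ together with a dyadic-shell count in $\Z^{dk}$ gives $\|\mu\|_{U^{k+1},>N}^{2^{k+1}}\le C\,N^{-((k+1)\beta-dk)}$, the tail converging exactly when $\beta>\tfrac{dk}{k+1}$ --- this is where the hypothesis that $\beta$ be close to $d$ is used. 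For $\|\mu_n\|_{U^{k+1},>N}$ I would invoke the monotonicity Lemma \ref{thm:smalltri}: for $N\le 2^n$ the low-frequency profiles of $\mu$ and $\mu_n$ almost coincide, so the deficit $\eps=\|\mu\|_{U^{k+1},\le N}^{2^{k+1}}-\|\mu_n\|_{U^{k+1},\le N}^{2^{k+1}}$ is exponentially small in $n$ (being governed by the Fourier tail attached to $1-\widehat{\phi_n}$), and Lemma \ref{thm:smalltri} then yields $\|\mu_n\|_{U^{k+1},>N}^{2^{k+1}}\le\|\mu\|_{U^{k+1},>N}^{2^{k+1}}+\eps$.

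It remains only to choose $N$. Writing $N=2^{\theta n}$, the low-frequency term behaves like $2^{(dk\theta-2r_k)n}$ and the high-frequency term decays like $2^{-c(\beta)\theta n}$, where $c(\beta)$ is the effective exponent emerging from the previous paragraph; balancing the two, i.e.\ taking $\theta=\tfrac{2r_k}{dk+c(\beta)}$, produces a bound of the form $\|\mu-\mu_n\|_{U^{k+1}}\le C\,2^{-\frac{r_{k+1}}{2^{k+1}}n}$, and tracking $c(\beta)$ through Proposition \ref{thm:reltri} is precisely the bookkeeping that forces the factor relating $r_{k+1}$ to $r_k$ to be $2-\frac{2^{3(k+1)-2}}{2^{3(k+1)-2}-[1-\frac{(k+2)\beta}{(k+1)d}]}$. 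The main obstacle is therefore Proposition \ref{thm:reltri} itself --- and inside it the iterated Cauchy--Schwarz reduction of Lemma \ref{thm:highcross} and the technically heavy, measure-valued spatial estimate Lemma \ref{thm:overgrowth} (delicate precisely because $\mu$ may be singular, so the mollification limits must be reassembled carefully into the $\tri^k$ objects). Once those, together with Lemma \ref{thm:start} and Lemma \ref{thm:smalltri}, are in hand, the assembly and optimization above go through with little further effort.
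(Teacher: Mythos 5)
Your proposal follows essentially the same route as the paper's proof: induction on $k$ with the base case $k=2$ handled by $\|\cdot\|_{U^2}^4=\|\widehat{\cdot}\|_{\ell^4}^4$, the low/high frequency split with Lemma \ref{thm:start} plus the inductive hypothesis controlling the low part, the $k$-th order decay plus the monotonicity Lemma \ref{thm:smalltri} controlling $\|\mu\|_{U^{k+1},>N}$ and $\|\mu_n\|_{U^{k+1},>N}$, Proposition \ref{thm:reltri} to combine them, and a final optimization of the cutoff. The only cosmetic difference is that the paper makes the smallness of the deficit $\eps$ in Lemma \ref{thm:smalltri} explicit via the reverse triangle inequality for $\|\cdot\|_{U^{k+1},\le N}$ applied to $\mu=\mu_n+(\mu-\mu_n)$, where you gesture at it more loosely; the substance is identical.
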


\begin{proof}
 Our proof is inductive. The $k=2$ case is immediate since
 $\|\mu-\mu_n\|_{U^2}^4 \approx \sum_{|c|>2^n}|\widehat{\mu}(c)|^4\leq C \sum_{|c|>2^n} |c|^{-2\beta} \approx 2^{(2\beta-d)n}$.
 (It is here that orthogonality between $\mu-\mu_n$ and $\mu_n$is crucial, and for this reason  we think of the methods of Sections \ref{ch:FrequencyRestricted} and \ref{ch:Truncations}
 as substituting for this orthogonality for higher $U^k$.)
 
 Suppose that the claim holds for some particular $k$. Set $f=\mu_n$ and $g=\mu-\mu_n$. Then we have that
 \begin{align}
  \widehat{\tri^k g}(0;0) = \|g\|_{U^k}^{2^k}\leq C 2^{-r_kn}
\end{align}
Since $|\widehat{\tri^k g}(0;\eta)|\leq \widehat{\tri^k g}(0;0)$, we have by Lemma \ref{thm:start} that 

\begin{align}
 |\widehat{\tri^k g}(0;\eta)|\leq 2^{-r_k n}
\end{align}
for all $\eta$.

Let $m>0\in\R$. Then

\begin{align}
 \|g\|_{U^k+1,<m}^{2^{k+1}}\leq \sum_{\eta\in\Z^k} |\widehat{\phi_{m}^{[k]}}(0;\eta)|^2|\widehat{\tri^{k} g}(0;\eta)|^2\lesssim 2^{dkm}2^{-2r_kn}=2^{-2r_kn+dkm}
\end{align}

By the reverse triangle inequality applied to $\|\cdot\|_{U^{k+1},<m}$, we have

\begin{align}
&|\|\mu\|_{U^{k+1},<m}-\|(\ast^k\phi_n)\ast\mu\|_{U^{k+1},<m}|\\=&
 |\|f+g\|_{U^{k+1},<m}-\|f\|_{U^{k+1},<m}|\leq \|g\|_{U^{k+1},<m}\\\leq&
 2^{\frac{-2r_kn+dkm}{2^{k+1}}}
\end{align}

We will use that

\begin{align}
 \|\mu\|_{U^{k+1},>m}^{2^{k+1}}\\\lesssim&
 \sum_{|\eta|>2^m} |\widehat{\tri^k\mu}(0;\eta)|^2\\\lesssim&
 \sum_{|\eta|>2^m} |\eta|^{-(k+1)\beta}\\\lesssim&
 2^{-((k+1)\beta-kd)m}
\end{align}

By Lemma \ref{thm:smalltri}, we then have that 
\begin{align}
 &\|f\|_{U^{k+1},>m}^{2^{k+1}}\leq \|\mu\|_{U^{k+1},>m}^{2^{k+1}}+2^{-2r_kn+kdm}\\\lesssim&
 2^{-((k+1)\beta-kd)m}+2^{-2r_kn+kdm}
\end{align}

Applying Proposition \ref{thm:reltri}, we then have that
\begin{align}
 \|g\|_{U^{k+1},>m}\leq C \big( 2^{-((k+1)\beta-kd)m}+2^{-2r_kn+kdm}\big)^{\frac{1}{2^{3k-2}}}
\end{align}

Hence 

\begin{align}\label{aboveagain}
 \|g\|_{U^{k+1}}^{2^{k+1}}=\|g\|_{U^{k+1},\leq m}^{2^{k+1}}+\|g\|_{U^{k+1},\leq m}^{2^{k+1}}\\\lesssim&
2^{-2r_kn+kdm}+ \big( 2^{-((k+1)\beta-kd)m}+2^{-2r_kn+kdm}\big)^{\frac{1}{2^{3k-2}}}
\end{align}

Up to constants, this is minimized when we choose $m=\frac{2^{3k-2}}{(k+1)\beta+(2^{3k-2}-1)kd} r_kn$, at which time the exponents $-[((k+1)\beta-kd)m](2^{3k-2})$ and $-2r_kn+kdm$ 
are both equal. Plugging this value of $m$ in to (\ref{aboveagain}), we obtain, up to constants, the bound

\begin{align}
 \|g\|_{U^{k+1}}\lesssim 2^{-\left[2-\frac{{2^{3k-2}}}{2^{3k-2}-[1-\frac{(k+1)\beta}{kd}}]\right] r_k n} = 2^{-r_{k+1}n}
\end{align}
So the claim is completed by induction.

\end{proof}

It is worth noting that $r_k=r_k(\beta)$ increases as $\beta$ increases and is positive for $\beta$ close enough to $d$, as one would expect.


\Ss{Low Frequency Components}
\label{ch:Higher-order Fourier Dimension}

Our goal in this section is to prove Lemma \ref{thm:start}, which tells us that $\widehat{\tri^k\mu}(0;0) = \|\mu\|_{U^{k-1}}^{2^{k-1}}$ controls $\widehat{\tri^k\mu}(0;\bm{\eta})$ for any $\bm{\eta}$. Informally, this
means that the sum of the small frequencies of $\tri^k\mu$ is controlled by the a constant multiple (depending on our definition of ``small'') of $\|\mu\|_{U^{k-1}}^{2^{k-1}}$. 
      
In order to prove Lemma \ref{thm:start}, we will need to know the affect of permuting the measures $\mu_{\iota}$ on $\tri^{k+1}\bm{\mu}$. This we record as Lemma \ref{thm:permute} at the 
end of the present section.

\begin{lemma}\label{thm:start}
 Let $\mu\in U^{k}$ be a signed measure and suppose that $k\geq 2$. Then for any $\bm{\eta}\in\Z^k$, 
 \begin{align}
  |\widehat{\tri^k\mu}(0;\bm{\eta})|\leq \widehat{\tri^k\mu}(0;0)
 \end{align}
 \end{lemma}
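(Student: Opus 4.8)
The plan is to exploit the representation of $\widehat{\tri^k\mu}(0;\bm{\eta})$ as a sum over a lattice coming from Theorem \ref{thm:the following identities} and recognize it as an inner product of a function against itself, shifted. Recall from \eqref{ipad} that
\begin{align*}
 \widehat{\tri^k\mu}(0;0) = \|\mu\|_{U^k}^{2^k} = \sum_{\bm c\in\Z^{k-1}} |\widehat{\tri^{k-1}\mu}(0;\bm c)|^2,
\end{align*}
a sum of nonnegative terms. The hope is that for general $\bm\eta$ the quantity $\widehat{\tri^k\mu}(0;\bm\eta)$ can likewise be written (after unwinding \eqref{iphone} one level, with $\xi=0$ and peeling off the last coordinate $\eta_k$) as a bilinear pairing $\sum_{\bm c} \widehat{\tri^{k-1}\mu}(-\eta_k;\bm\eta'-\bm c)\,\overline{\widehat{\tri^{k-1}\mu}(-\eta_k;-\bm c)}$ or something structurally of this shape once one accounts for the conjugation symmetry $\widehat{\tri^{k-1}\mu}(-\zeta) = \overline{\widehat{\tri^{k-1}\mu}(\zeta)}$ valid for real (signed) $\mu$. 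If $\widehat{\tri^k\mu}(0;\bm\eta)$ is genuinely an inner product $\langle F, F(\cdot - \bm\eta^\ast)\rangle_{\ell^2}$ for $F = \widehat{\tri^{k-1}\mu}(-\eta_k;\cdot)$, then Cauchy–Schwarz gives $|\widehat{\tri^k\mu}(0;\bm\eta)| \le \|F\|_{\ell^2}^2 = \sum_{\bm c}|\widehat{\tri^{k-1}\mu}(-\eta_k;\bm c)|^2$, and it remains to check this last sum does not exceed $\sum_{\bm c}|\widehat{\tri^{k-1}\mu}(0;\bm c)|^2 = \widehat{\tri^k\mu}(0;0)$.

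Concretely I would proceed as follows. First, fix $\bm\eta = (\bm\eta',\eta_k)\in\Z^k$ and apply \eqref{iphone} (with $k$ replaced by $k-1$) to expand $\widehat{\tri^k\mu}(0;\bm\eta)$ as a sum over $\bm c\in\Z^{k-1}$ of a product of two $\widehat{\tri^{k-1}\mu}$ factors. Second, use that $\mu$ is a signed (real) measure, so $\tri^{k-1}\mu$ is real and $\widehat{\tri^{k-1}\mu}(-\zeta)=\overline{\widehat{\tri^{k-1}\mu}(\zeta)}$; reindex the sum (a change of variables $\bm c \mapsto \bm\eta' - \bm c$ or similar) to bring the product into the form $G(\bm c)\,\overline{G(\bm c + \text{shift})}$ with $G(\bm c) = \widehat{\tri^{k-1}\mu}(-\eta_k;\bm c)$. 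Third, apply Cauchy–Schwarz in $\ell^2(\Z^{k-1})$ to bound the modulus by $\sum_{\bm c}|G(\bm c)|^2 = \sum_{\bm c}|\widehat{\tri^{k-1}\mu}(-\eta_k;\bm c)|^2$. Fourth — and this is the only place the "$0$" on the right-hand side really matters — bound $\sum_{\bm c}|\widehat{\tri^{k-1}\mu}(-\eta_k;\bm c)|^2$ by $\sum_{\bm c}|\widehat{\tri^{k-1}\mu}(0;\bm c)|^2$; this is itself an instance of the same monotonicity principle applied with one fewer coordinate (i.e. it is the statement $\sum_{\bm c}|\widehat{\tri^{k-1}\mu}(\xi;\bm c)|^2 \le \sum_{\bm c}|\widehat{\tri^{k-1}\mu}(0;\bm c)|^2$ for all $\xi$), so I would set up the whole argument as an induction on $k$, with the base case $k=2$ being the elementary inequality $\sum_c |\widehat\mu(\xi - c)\widehat\mu(-c)| \le \sum_c|\widehat\mu(c)|^2$ via Cauchy–Schwarz and Plancherel.

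The main obstacle I anticipate is bookkeeping: correctly tracking which coordinate of $\bm\eta$ gets "spent" at each level of the recursion in \eqref{iphone}, making sure the conjugation-symmetry reindexing genuinely produces a $G$-against-shifted-$G$ pairing rather than two unrelated functions, and confirming that Lemma \ref{thm:permute} (permuting the $\mu_\iota$) is or is not needed to symmetrize the roles of the coordinates. A secondary subtlety is that $\tri^{k-1}\mu$ a priori only exists as a measure when $\mu\in U^{k-1}$, but since $\mu\in U^k \subseteq U^{k-1}$ and the identities are purely Fourier-analytic this should cause no trouble. If the clean "inner product of $F$ with a shift of $F$" structure fails to materialize exactly, the fallback is to apply Cauchy–Schwarz more crudely — $|\sum_{\bm c} A(\bm c)B(\bm c)| \le (\sum|A|^2)^{1/2}(\sum|B|^2)^{1/2}$ where both factors are, after reindexing, equal to $(\sum_{\bm c}|\widehat{\tri^{k-1}\mu}(-\eta_k;\bm c)|^2)^{1/2}$ — which still closes the induction.
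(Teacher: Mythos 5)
Your steps one through three are sound and recover, on the Fourier side, exactly the paper's intermediate claim. After expanding via \eqref{iphone} with $\xi=0$ and using that $\mu$ real forces $\widehat{\tri^{k-1}\mu}(\eta_k;\bm c)=\overline{\widehat{\tri^{k-1}\mu}(-\eta_k;-\bm c)}$, a reindexing puts $\widehat{\tri^k\mu}(0;\bm\eta)=\sum_{\bm d}G(\bm d+\bm\eta')\overline{G(\bm d)}$ with $G(\bm c)=\widehat{\tri^{k-1}\mu}(-\eta_k;\bm c)$, and Cauchy--Schwarz in $\ell^2(\Z^{k-1})$ gives
\begin{align*}
|\widehat{\tri^k\mu}(0;\bm\eta)|\le\sum_{\bm c}|\widehat{\tri^{k-1}\mu}(-\eta_k;\bm c)|^2=\widehat{\tri^k\mu}(0;0,\dots,0,\eta_k).
\end{align*}
The paper gets this same inequality by representing $\widehat{\tri^k\mu}(0;\bm\varsigma)$ spatially as $\lim_n\int e^{-2\pi i\bm\varsigma'\cdot\bm u'}|\cdots|^2\,d\bm u'$ and taking absolute values; the two routes are Plancherel-equivalent, so up to this point you and the paper agree.

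The gap is in your step four. The inequality you still need, $\sum_{\bm c}|\widehat{\tri^{k-1}\mu}(\xi;\bm c)|^2\le\sum_{\bm c}|\widehat{\tri^{k-1}\mu}(0;\bm c)|^2$, is \emph{not} an instance of the lemma at one fewer coordinate: the level-$(k-1)$ lemma reads $|\widehat{\tri^{k-1}\mu}(0;\bm c)|\le\widehat{\tri^{k-1}\mu}(0;0)$, i.e.\ it pins the first argument at $0$ and varies $\bm c$, whereas you need monotonicity in the \emph{first} argument $\xi$ with an $\ell^2$ sum over $\bm c$ --- a statement of a different shape that the inductive hypothesis gives you no leverage on. Worse, by the very identity from step one, your needed inequality is identical to $\widehat{\tri^k\mu}(0;0,\dots,0,-\xi)\le\widehat{\tri^k\mu}(0;0)$, which is a special case of the level-$k$ lemma you are in the middle of proving, so the proposed induction on $k$ is circular. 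The correct finish is the permutation you hedged on: Lemma \ref{thm:permute} gives $\widehat{\tri^k\mu}(0;0,\dots,0,\eta_k)=\widehat{\tri^k\mu}(0;\eta_k,0,\dots,0)$ (possible precisely because $k\ge2$ supplies a slot other than the last), and then one more application of the Cauchy--Schwarz step zeros out the new last coordinate, which is already $0$. No recursion on $k$ is needed; the argument is Cauchy--Schwarz, permute, Cauchy--Schwarz.
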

 \begin{proof}
 We first claim that for any $\bm{\varepsilon}\in\Z^k$, we have
 
 \begin{align}\label{claim}
    |\widehat{\tri^k\mu}(0;\bm{\varepsilon})|\leq |\widehat{\tri^k\mu}(0;0,\dots,0,\varepsilon_k)|
 \end{align}
 
 To see this, let $\bm{\varsigma}$ be any vector in $\Z^k$. We use Corollary \factories of \GSM to write
  \begin{align}\label{thq1}
   \widehat{\tri^k\mu}(0;\bm{\varsigma}) = \lim_{n\ra\infty} \int e^{-2\pi i \bm{\varsigma}\cdot u} \Phi_{n}\ast\tri^{k-1}\mu(x-u_k;\bm{u}')\overline{\Phi_{n}\ast\tri^{k-1}\mu(x;\bm{u}')}\,dx\,d\bm{u}
 \end{align}
 
 Changing variables by sending $u_{k}\mapsto -u_{k}+x$, this becomes
 
  \begin{align}\label{thq2}
    (\ref{thq1})&=\lim_{n\ra\infty} \int e^{-2\pi i \bm{\varsigma}'\cdot \bm{u}'}e^{-2\pi i \varsigma_k(x-u_k)} \Phi_{n}\ast\tri^{k-1}\mu(u_k;\bm{u}')\overline{\Phi_{n}\ast\tri^{k-1}\mu(x;\bm{u}')}\,dx\,du_k\,d\bm{u}'\\\notag{}=&
    \lim_{n\ra\infty} \int e^{-2\pi i \bm{\varsigma}'\cdot \bm{u}'} |\int e^{-2\pi i\varsigma_k x}\Phi_{n}\ast\tri^{k-1}\mu(u_k;\bm{u}')\,dx|^2\,d\bm{u}'.
 \end{align}
 
 This shows that
 \begin{align} \label{claim1}
\widehat{\tri^k\mu}(0;\bm{\varsigma}) = \lim_{n\ra\infty} \int e^{-2\pi i \bm{\varsigma}'\cdot \bm{u}'}|\int e^{-2\pi i\varsigma_k x}\Phi_{n}\ast\tri^{k-1}\mu(u_k;\bm{u}')\,dx|^2\,d\bm{u}'.
 \end{align}

Letting $\bm{\varsigma} = \bm{\varepsilon}$ and taking absolute values in (\ref{claim1}), we then have
\begin{align}\label{thq3}
|\widehat{\tri^k\mu}(0;\bm{\varepsilon})|\leq \lim_{n\ra\infty} \int |\int e^{-2\pi i\varepsilon_k x}\Phi_{n}\ast\tri^{k-1}\mu(x;\bm{u}')\,dx|^2\,d\bm{u}'
 \end{align}
 
 Now using (\ref{thq1}) and (\ref{thq2}) again with $\bm{\varsigma}= (0,\dots,0,\varepsilon_k)$, the right-hand side of (\ref{thq3}) is
 
 \begin{align}\label{thq4}
=& \lim_{n\ra\infty} \int e^{-2\pi i \bm{\varsigma}\cdot u}\Phi_{n}\ast\tri^{k-1}\mu(x-u_k;\bm{u}')\overline{\Phi_{n}\ast\tri^{k-1}\mu(x;\bm{u}')}\,dx\,d\bm{u}\\\notag{}=&
\widehat{\tri^k\mu}(0;\bm{\varsigma}) = \widehat{\tri^k\mu}(0;0,\dots,0,\varepsilon_k) 
 \end{align}
since we chose $\bm{\varsigma}=(0,\dots,0,\varepsilon_k)$.

So we have shown 

 \begin{align} \label{claim2}
|\widehat{\tri^k\mu}(0;\bm{\varepsilon})| \leq& \widehat{\tri^k\mu}(0;0,\dots,0,\varepsilon_k)
\end{align}
which was the claim.

Now let $\bm{\eta}\in \Z^k$ be a vector. By the claim, we have that 

 \begin{align} \label{claim3}
|\widehat{\tri^k\mu}(0;\bm{\eta})| \leq& \widehat{\tri^k\mu}(0;0,\dots,0,\eta_k)
\end{align}

Since $k\geq 2$, the last entry  of the vector $(0,\dots,0,\eta_k)$ is distinct from the first entry, so when we use Lemma \ref{thm:permute} to interchange them, we can write the
right-hand side of (\ref{claim3}) as $\widehat{\tri^k\mu}(0;\eta_k,0,\dots,0)$ where this does in earnest have a last entry of $0$. 

Now letting $\bm{\varepsilon}= (\eta_k,0,\dots,0)$ and applying the claim (\ref{claim}) once more, we then have that
 \begin{align} \label{claim4}
|\widehat{\tri^k\mu}(0;\eta_k,0,\dots,0)| \leq& \widehat{\tri^k\mu}(0;0,\dots,0,0)
\end{align}
and combining this with (\ref{claim3}), we have
 \begin{align} \label{claim5}
|\widehat{\tri^k\mu}(0;\bm{\eta})| \leq& \widehat{\tri^k\mu}(0;0)
\end{align}
which is what we sought to show.
\end{proof}

\begin{lemma}\label{thm:permute}
Let $k\in\N$,  $\mu_{\iota}$, $\iota\in\{0,1\}^k$ be measures on $\T^d$, $\bm{\mu}= \{\mu_{\iota}\}_{\iota\in\{0,1\}^{k+1}}$. 
For $\pi$ any permutation of $\{1,\dots,k\}$ and $k$-tuple $(x_1,\dots,x_k)$, let $\pi (x_1,\dots,x_k) = (x_{\pi 1},\dots,x_{\pi k})$. Define 
\begin{align*}
 \bm{\pi\mu} =& \left\{\mu_{\pi\iota}\right\}_{\iota\in\{0,1\}^{k+1}}\\
 \pi\tri^{k+1}\bm{\mu} =& \tri^{k+1}(\bm{\pi\mu})
\end{align*}

Define also $\bm{\pi\mu_0} = \{\mu_{\pi0\iota'}\}_{\iota'\in\{0,1\}^k}$, and  $\bm{\pi\mu_1} = \{\mu_{\pi1\iota'}\}_{\iota'\in\{0,1\}^k}$.

Then 
\begin{align}\label{simlye}
 \widehat{\pi\tri^{k+1}(\bm{\mu})}(\xi;\bm{\eta}) = \widehat{\tri^{k+1}(\bm{\mu})}(\xi;\pi\bm{\eta}) 
\end{align}

Further, 

\begin{align}\label{similarlye}
 \widehat{\tri^{k+1}_{}(\bm{\mu})}(\xi;\bm{\eta}) = \widehat{\tri^{k+1}(\bm{\mu})}(\xi;\bm{\eta}',\xi-\eta_{k+1}) 
\end{align}
\end{lemma}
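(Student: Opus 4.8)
The plan is to prove both identities by tracing through the recursive definition of $\widehat{\tri^{k+1}(\bm{\mu})}$ supplied by equation (\ref{5}) of the excerpt, namely
\begin{align*}
 \widehat{\tri^{k+1}(\bm{\mu})}(\xi;\bm{\eta}) = \sum_{\bm{c}\in\Z^{k}}  \widehat{\tri^{k}(\bm{\mu_0})}(-\eta_{k+1};\bm{\eta}'-\bm{c})\, \widehat{\tri^{k}(\bm{\mu_1})}(\xi+\eta_{k+1};\bm{c}),
\end{align*}
combined with an induction on $k$ and, for (\ref{simlye}), the representation (\ref{claim1}) established in the proof of Lemma \ref{thm:start}, which expresses $\widehat{\tri^k\bm{\mu}}(0;\bm{\varsigma})$ (more generally the mixed version) as an integral whose dependence on $\bm{\varsigma}'$ and on $\varsigma_k$ is visibly symmetric under relabelling of the $\bm{u}'$ coordinates. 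The cleanest route to (\ref{simlye}) is in fact not the recursion but the symmetry of $\tri^{k+1}\bm{\mu}$ itself as a measure on $\T^{k+2}$: permuting the cube coordinates $\iota\in\{0,1\}^{k+1}$ by $\pi$ corresponds to permuting the $\bm{u}$-coordinates of the measure $\tri^{k+1}\bm{\mu}(x;\bm{u})$ by $\pi$, since in the definition $d\tri^{k+1}\bm{\mu}(x;\bm{u})$ the coordinate $u_j$ is paired precisely with the $j$th bit of $\iota$. Taking Fourier transforms in $\bm{u}$ then turns a permutation of the $\bm{u}$-variables into the corresponding permutation $\pi\bm{\eta}$ of the frequency variables, while leaving the $x$-variable (hence $\xi$) untouched. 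I would present this as the main argument, and optionally cross-check it against the recursion for small $k$.

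For the first identity I would argue as follows. It suffices to treat transpositions of adjacent indices $j, j+1$, since these generate $S_k$ and the map $\bm{\mu}\mapsto\bm{\pi\mu}$ composes correctly. For a transposition fixing the last coordinate, i.e. $\pi$ a permutation of $\{1,\dots,k\}$ (so $\pi(k+1)=k+1$), the recursion reduces the claim to the same statement one level down for $\bm{\mu_0}$ and $\bm{\mu_1}$ separately: $\pi$ permutes the $\bm{c}$ summation indices and the $\bm{\eta}'$ arguments in tandem, and reindexing the sum $\bm{c}\mapsto\pi^{-1}\bm{c}$ finishes it by the inductive hypothesis. The only genuinely new case is the transposition $\tau$ swapping coordinates $k$ and $k+1$; this is exactly the content of (\ref{similarlye}) once one knows the coordinate-permutation description above, because swapping $u_k$ and $u_{k+1}$ is an affine change of variables in the definition of $\tri^{k+1}\bm{\mu}$, and the base case $k=1$ (where $\tri^2\bm{\mu}(x;u_1,u_2)$ manifestly treats $u_1,u_2$ symmetrically, being a four-fold product indexed by $\{0,1\}^2$) anchors the induction. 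So the structure is: base case $k=1$ by direct inspection; adjacent transpositions within $\{1,\dots,k\}$ by the recursion and inductive hypothesis; the final swap by the change-of-variables in the defining integral.

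For (\ref{similarlye}) I would again use (\ref{5}): the recursion exhibits $\widehat{\tri^{k+1}\bm{\mu}}(\xi;\bm{\eta})$ as depending on $\eta_{k+1}$ only through the two arguments $-\eta_{k+1}$ and $\xi+\eta_{k+1}$ of the factors $\widehat{\tri^k\bm{\mu_0}}$ and $\widehat{\tri^k\bm{\mu_1}}$; replacing $\eta_{k+1}$ by $\xi-\eta_{k+1}$ sends these to $-\xi+\eta_{k+1}$ and $2\xi-\eta_{k+1}$, which is \emph{not} the same, so naively this does not work — which tells me the identity (\ref{similarlye}) is really a reflection symmetry of the underlying measure that must be read off from the spatial definition rather than from (\ref{5}). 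Concretely, in the definition of $\tri^{k+1}\bm{\mu}(x;\bm{u})$ the substitution $x\mapsto x+u_{k+1}$, $u_{k+1}\mapsto -u_{k+1}$ (which fixes the total integral) interchanges the roles played by the $0$ and $1$ values of the last bit of $\iota$, matching $\bm{\mu_0}\leftrightarrow\bm{\mu_1}$ up to this reflection, and on the Fourier side this is precisely the substitution $\eta_{k+1}\mapsto\xi-\eta_{k+1}$ with $\bm{\eta}'$ and $\xi$ unchanged. I would spell this out by writing $\tri^{k+1}\bm{\mu}$ explicitly as the pushforward of $\tri^k\bm{\mu_0}\otimes\tri^k\bm{\mu_1}$ (glued along $x$) under the appropriate linear map, computing how that linear map transforms under the reflection, and reading off the effect on the Fourier transform.

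\textbf{Main obstacle.} The real work is bookkeeping: pinning down, once and for all, the precise correspondence between bits of the cube index $\iota\in\{0,1\}^{k+1}$ and the spatial coordinates $(x;u_1,\dots,u_{k+1})$ in the definition of $\tri^{k+1}\bm{\mu}$ from \cite{M1}, so that "permute the $\iota$'s by $\pi$" really does translate to "permute the $u_j$'s by $\pi$, Fourier-dually permute the $\eta_j$'s by $\pi$" with no stray sign or shift, and similarly that the last-coordinate reflection produces exactly $\xi-\eta_{k+1}$ and not, say, $-\eta_{k+1}$ or $\xi+\eta_{k+1}$. Because $\tri^{k+1}$ is only defined as a limit of mollified objects, each coordinate change should first be justified at the level of the mollified measures $\Phi_n\ast(\cdots)$ (as in the proof of Lemma \ref{thm:start}) and then passed to the limit; the limit passage is routine given the convergence results of \cite{M1}, but it does need to be invoked. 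Once the dictionary is fixed, both (\ref{simlye}) and (\ref{similarlye}) are immediate.
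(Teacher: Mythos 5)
Your handling of (\ref{similarlye}) coincides with the paper's: it is obtained there in one line from the change of variables $x\mapsto x+u_{k+1}$, exactly the reflection you describe. Likewise, your fallback route for (\ref{simlye}) --- reduce to transpositions, and handle any $\pi$ fixing the last coordinate by applying the recursion (\ref{5}) and the inductive hypothesis to the factors $\widehat{\tri^k\bm{\mu_0}}$, $\widehat{\tri^k\bm{\mu_1}}$ inside the sum --- is precisely the first half of the paper's induction.

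The gap is in the one case you call "genuinely new," the transposition of coordinates $k$ and $k+1$, and it is exactly there that the paper does all of its work. First, this case is \emph{not} "the content of (\ref{similarlye})": that identity replaces $\eta_{k+1}$ by $\xi-\eta_{k+1}$ while keeping the same $\bm{\mu}$ and the same ordering of coordinates, whereas the transposition requires $\widehat{\tri^{k+1}(\bm{\pi\mu})}(\xi;\bm{\eta})=\widehat{\tri^{k+1}(\bm{\mu})}(\xi;\eta_1,\dots,\eta_{k-1},\eta_{k+1},\eta_k)$; these are different symmetries (one is a reflection pairing $u_{k+1}$ with $x$, the other swaps $u_k$ with $u_{k+1}$), and neither implies the other. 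Second, your preferred "direct spatial symmetry" argument presumes that $\tri^{k+1}\bm{\mu}$ is available as a product $\prod_{\iota}\mathcal{C}^{|\iota|}\mu_{\iota}(x-\iota\cdot u)$ that is manifestly symmetric under simultaneous permutation of the bits of $\iota$ and the coordinates of $u$. But the object is defined \emph{recursively}, as $\tcal{\tri^k\bm{\mu_0}}{\tri^k\bm{\mu_1}}$ with nested limits of mollifications that unfold one coordinate at a time; the last coordinate $u_{k+1}$ is privileged by the construction, and the assertion that re-associating the product around a different coordinate yields the same limiting measure is precisely the statement to be proved --- it is not a "routine limit passage." The paper establishes it on the Fourier side: apply the recursion (\ref{5}) twice to write $\widehat{\tri^{k+1}\bm{\mu}}(\xi;\bm{\eta})$ as a double sum over $\bm{c}$ and $\bm{a},\bm{b}$, interchange the order of summation (this is where the uniform absolute convergence from Theorem \ref{thm:the following identities} is genuinely needed), substitute $\bm{b}\mapsto-\bm{b}+\bm{a}$, $\bm{c}'\mapsto-\bm{c}'+\bm{a}$, and re-assemble by two further applications of the recursion into $\widehat{\tri^{k+1}(\bm{\pi\mu})}$ evaluated at the transposed frequency. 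Without that computation (or a rigorous mollified-and-pass-to-the-limit version of the $u_k\leftrightarrow u_{k+1}$ change of variables, which would amount to the same interchange of limits), the induction does not close.
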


\begin{proof}
(\ref{similarlye}) is an immediate consequence of the change of variables $x\mapsto x + u_{k+1}$, so we turn to (\ref{simlye}).

 Since the claim trivially holds for $k+1=1,2$, we will assume it holds for some $k$ and show that it then holds for $k+1$ as well.
 
 Proposition \itemsaiphone tells us that
 
 \begin{align}
  \widehat{\tri^{k+1}(\bm{\mu})}(\xi;\bm{\eta})  = \sum_{\bm{c}\in\Z^k}
  \widehat{\tri^{k}(\bm{\mu_1})}(\xi+\eta_{k+1};\bm{c})  \overline{\widehat{\tri^{k}(\bm{\mu_0})}(\eta_{k+1};\bm{c}-\bm{\eta}') }
 \end{align}

 Thus our inductive hypothesis applied to the terms in the sum gives us the claim for $\pi$ any permutation of $[1,\dots,k+1]$ fixing $k+1$. So it suffices to show the claim for $\pi$ the transposition $(k,k+1)$.

 We compute on the Fourier side via two applications of Proposition \itemsaiphoneB that
  \begin{align*}
  &\widehat{\tri^{k+1}(\bm{\mu})}(\xi;\bm{\eta})  = 
  \sum_{\bm{c}\in\Z^k} \widehat{\tri^k\bm{\mu}_0}(\xi+\eta_{k+1};\bm{c}) \overline{\widehat{\tri^k\bm{\mu}_1}(\eta_{k+1};\bm{c}-\bm{\eta}')}  \\=&
\sum_{\bm{c}} \bigg(\sum_{\bm{a}\in\Z^{k-1}} \widehat{\tri^{k-1}(\bm{\mu_{00}})}(\xi+\eta_{k+1}+c_k;\bm{a})\overline{\widehat{\tri^{k-1}(\bm{\mu_{01}})}(c_{k};\bm{a}-\bm{c}')}\bigg) \\&
\bigg(\sum_{\bm{b}\in\Z^{k-1}} \overline{\widehat{\tri^{k-1}(\bm{\mu_{10}})}(\eta_{k+1}-\eta_k+c_k;\bm{b})}\widehat{\tri^{k-1}(\bm{\mu_{11}})}(c_k-\eta_{k};\bm{b}-\bm{c}'+\bm{\eta}'')\bigg)
   \end{align*}
   and that each of these series converges uniformly.
   Uniform convergence in $\bm{c}'$ means it is valid to interchange the order of summation. 
   We do so, then  send $\bm{b}\mapsto -\bm{b}+\bm{a}$ and $\bm{c}'\mapsto -\bm{c}'+\bm{a}$ so this becomes

        \begin{align*}&
\sum_{,\bm{a}\in\Z^k,\bm{c}\in\Z^{k-1},\bm{b}\in\Z^{k-1}}\bigg( 
\widehat{\tri^{k-1}(\bm{\mu_{00}})}(\xi+\eta_{k+1}+c_k;\bm{a})\overline{\widehat{\tri^{k-1}(\bm{\mu_{01}})}(c_{k};\bm{a}+\bm{c}'+\bm{a})}\bigg)\\&
\bigg(\overline{\widehat{\tri^{k-1}(\bm{\mu_{10}})}(\eta_{k+1}-\eta_k+c_k;-\bm{b}+\bm{a})}\widehat{\tri^{k-1}(\bm{\mu_{11}})}(c_k-\eta_{k};-\bm{b}+\bm{a}+\bm{c}'-\bm{a}+\bm{\eta}'')\bigg) \\=&
\sum_{(\bm{b},c_k)}\bigg(\sum_{\bm{a}\in\Z^{k-1}} \widehat{\tri^{k-1}(\bm{\mu_{00}})}(\xi+\eta_{k+1}+c_k;\bm{a})\overline{\widehat{\tri^{k-1}(\bm{\mu_{10}})}(\eta_{k+1}-\eta_k+c_k;\bm{a}-\bm{b})}\bigg)\\&
\bigg(\sum_{\bm{c}'\in\Z^{k-1}}\overline{\widehat{\tri^{k-1}(\bm{\mu_{01}})}(c_{k};\bm{c}')}\widehat{\tri^{k-1}(\bm{\mu_{11}})}(c_k-\eta_{k};\bm{c}'-\bm{b}+\bm{\eta}'')\bigg)
   \end{align*}

%

Applying again two iterations of Proposition \itemsaiphoneB to collect together terms, this becomes

\begin{align}
  & \sum_{(a,c_k)\in\Z^k} \widehat{\tri^{k}(\bm{\pi\mu_1})}(\xi+\eta_{k};a,c_k)      \overline{\widehat{\tri^{k}(\bm{\pi\mu_0})}(-\eta_{k};a-\bm{\eta}'',c_{k}-\eta_{k+1})}  \\=&
  \widehat{\tri^{k+1}(\bm{\pi\mu})}(\xi;\eta_1,\dots,\eta_{k-1},\eta_{k+1},\eta_k)
 \end{align}
  This completes the induction, and we have shown that
 \begin{align}
   \widehat{\tri^{k+1}(\bm{\pi\mu})}(\xi;\bm{\eta}) =  \widehat{\tri^{k+1}(\bm{\mu})}(\xi;\pi\bm{\eta})
 \end{align}
 \end{proof}


\Ss{Frequency-Restricted Cauchy-Schwartz and Triangle Inequalities}\label{ch:FrequencyRestricted}

The purpose of this section is to establish certain variants of the (Gowers)-Cauchy-Schwarz inequality and the triangle inequality which control how the different frequencies of a measure 
$\mu$ may interact to affect the frequencies of $\tri^k\mu$ and the $U^{k+1}$ norm of $\mu$. These will provide a substitute for orthogonality in $U^k$, which is needed since two functions on
$\mathbb{T}^d$ with disjoint frequency supports may still interact in $U^k$ in complicated ways.
  In more detail, the main result says that if the portion of the $U^{k+1}$ norm of $\mu_1$ and $\mu_2$ coming from high frequencies are both small,
 in the sense that for $i=1,2$, $\|\mu_i\|_{U^{k+1},>N}$ is small for some $N$, then so too is any ``cross-term'' $\tri^{k+1}_{>N}(\{\mu_{\iota}\}_{\iota\in\{0,1\}^{k+1}})(\T^{k+2})$ where $\mu_{\iota}\in\{\mu_1,\mu_2\}$.
 It goes a way towards amending the failure of a true Cauchy-Schwarz result for $\tri^{k+1}_{>N}$.

Given $\mu_{\iota}\in U^k$, $\iota\in\{0,1\}^k$, $N\in\N$, and $j\in[1,k]$ an integer, let $\tri^k_{s_j>N}(\bm{\mu})$ and $\tri^k_{s_j>N,\eta_j+s_j>N}(\bm{\mu})$ be defined
as in Section \ref{ch:Truncations}. The results of Section \ref{ch:Truncations} will be used throughout this section.

 \begin{lemma}\label{thm:highcross}
Suppose that $\mu_{\iota}\in\{\mu_1,\mu_2\}$ for all $\iota\in\{0,1\}^k$, with $\|\mu_i\|_{U^{k+1}}\leq A$ and $\|\mu_i\|_{U^{k+1},>N} <\eps$ for $i=1,2$. Then
\begin{align}
 \sum_{\bm{\bm{\eta}}\in\Z^k,\|\bm{\bm{\eta}}\|_{\infty}>N} |\widehat{\tri^k(\bm{\mu})}(0;\bm{\bm{\eta}})|^2\\=:&
 \tri^{k+1}_{s_{\infty}>N}(\bm{\mu},\bm{\mu})
 \leq C(A)\eps^{{2^{-2(k-1)}}}\label{thirteynineone}
\end{align}

 \end{lemma}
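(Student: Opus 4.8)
The plan is to prove the bound by an iterative ``folding'' argument on the Fourier side, as sketched in the introduction, reducing the number of cross terms at each step at the cost of taking square roots. Write $\bm{\mu} = \{\mu_\iota\}_{\iota\in\{0,1\}^k}$ with each $\mu_\iota\in\{\mu_1,\mu_2\}$, and observe that by Lemma \ref{thm:permute} we may freely permute the coordinates $\eta_1,\dots,\eta_k$ without affecting the quantity $\sum_{\|\bm\eta\|_\infty>N}|\widehat{\tri^k(\bm\mu)}(0;\bm\eta)|^2$; more precisely, the condition $\|\bm\eta\|_\infty>N$ is permutation-invariant, and a union bound lets us replace it by $\sum_j \sum_{|\eta_j|>N}$ up to the factor $k$. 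So it suffices to bound $\sum_{|\eta_j|>N}|\widehat{\tri^k(\bm\mu)}(0;\bm\eta)|^2 = \widehat{\tri^{k+1}_{s_j>N}(\bm\mu,\bm\mu)}(0;0)$ for a fixed $j$, say $j=1$, and then sum over $j$.

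First I would apply Lemma \ref{thm:CS11} to write $\sum_{|\eta_j|>N}|\widehat{\tri^k(\bm\mu)}(0;\bm\eta)|^2 = \sum_{\bm\eta}\widehat{\tri^k_{s_j>N}(\bm f_1,\bm f_1)}(0;\bm\eta)\,\widehat{\tri^k(\bm f_2,\bm f_2)}(0;\bm\eta)$, where I split $\bm\mu$ into halves $\bm f_1,\bm f_2$ in the $k$-th coordinate direction. By Cauchy--Schwarz this is at most $\big[\sum_{\bm\eta}|\widehat{\tri^k_{s_j>N}(\bm f_1,\bm f_1)}(0;\bm\eta)|^2\big]^{1/2}\big[\sum_{\bm\eta}|\widehat{\tri^k(\bm f_2,\bm f_2)}(0;\bm\eta)|^2\big]^{1/2}$; the second factor is $\|\bm f_2\text{-assembly}\|$-type and is controlled by $\|\mu_i\|_{U^{k+1}}^{2^{k}}\leq A^{2^k}$ via the Gowers--Cauchy--Schwarz inequality of \cite{M1}. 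For the first factor, I would invoke Lemma \ref{thm:CS12} to rewrite $\sum_{\bm\eta}|\widehat{\tri^k_{s_j>N}(\bm f_1,\bm f_1)}(0;\bm\eta)|^2$ as $\sum_{|\eta_j|>N}\widehat{\tri^k_{s_j>N,s_j+\eta_j>N}(\bm f_1,\bm f_1)}(0;\bm\eta)\,\widehat{\tri^k(\bm f_1,\bm f_1)}(0;\bm\eta)$, apply Cauchy--Schwarz once more, and use Lemma \ref{thm:overgrowth} to bound $\sum_{|\eta_j|>N}|\widehat{\tri^k_{s_j>N,s_j+\eta_j>N}(\bm f_1,\bm f_1)}(0;\bm\eta)|^2$ by $C(A)$ (a bounded quantity — the spatial-side estimate of Lemma \ref{thm:overgrowth} absorbs the restriction cutoffs after a Cauchy--Schwarz that frees the $\psi$'s). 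The remaining factor $\sum_{|\eta_j|>N}|\widehat{\tri^k(\bm f_1,\bm f_1)}(0;\bm\eta)|^2$ is again of the form $\widehat{\tri^{k+1}_{s_j>N}(\bm g,\bm g)}(0;0)$ but with $\bm g = (\bm f_1,\bm f_1)$, which after a permutation (Lemma \ref{thm:permute}) can be recast as a cross-object with \emph{half as many} $\mu_2$'s (or $\mu_1$'s) as the original $\bm\mu$ had of the minority symbol. Thus one application of the above chain turns a bound on an object with $t$ copies of the minority symbol into $C(A)\,\eps^{?}\cdot(\text{object with } t/2 \text{ copies})^{1/4}$; iterating $\log_2$-many times (until the object becomes a pure $\|\mu_i\|_{U^{k+1},>N}^{2^{k+1}} < \eps^{2^{k+1}}$) and tracking the exponents gives the final exponent $2^{-2(k-1)}$ on $\eps$.

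I expect the main obstacle to be the bookkeeping in the iteration: one must verify that each ``fold'' genuinely halves the count of the minority symbol (which is where the permutation Lemma \ref{thm:permute} and the coordinate-splitting conventions of Section \ref{ch:Truncations} must mesh correctly), and one must carefully accumulate the powers of $\tfrac14$ and the constants $C(A)$ so that after $\approx 2(k-1)$ iterations the exponent on $\eps$ comes out to exactly $2^{-2(k-1)}$ rather than something slightly off. A secondary technical point, already flagged in the overview, is that Lemmas \ref{thm:CS11}, \ref{thm:CS12}, \ref{thm:overgrowth} must be applied to \emph{signed measures}, so the intermediate Fourier-side manipulations should really be justified by first mollifying (convolving with an approximate identity), performing the algebra for smooth functions where all sums converge absolutely, and then passing to the limit — reassembling the products $\tri^{k-2}g_1\cdots\tri^{k-2}g_2$ into the genuine measures $\tri^k(g_1,g_1)\tri^k(g_2,g_2)$ in the limit. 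Given Lemmas \ref{thm:CS11}, \ref{thm:CS12}, \ref{thm:overgrowth} and \ref{thm:permute} as black boxes, though, the remaining argument is just the careful induction on the number of cross terms described above.
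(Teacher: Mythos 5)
Your proposal follows the paper's proof essentially step for step: the union bound over $j$ together with the permutation reduction, then the chain Lemma \ref{thm:CS11} $\to$ Cauchy--Schwarz (with the Gowers--Cauchy--Schwarz bound $A^{2^{k+1}}$ on the untruncated factor) $\to$ Lemma \ref{thm:CS12} $\to$ Cauchy--Schwarz $\to$ Lemma \ref{thm:overgrowth}, iterated until the surviving factor is a pure $\|\mu_i\|_{U^{k+1},>N}^{2^{k+1}}<\eps^{2^{k+1}}$, with each pass costing a fourth root of the previous bound. The obstacle you flag at the end is real, and it is the one place your bookkeeping would not survive as written: a single fold replaces $\bm{\mu}=(\bm{\mu_0},\bm{\mu_1})$ by $(\bm{\mu_0},\bm{\mu_0})$, and this does \emph{not} in general halve the count of the minority symbol --- if all the $\mu_2$'s happen to sit inside $\bm{\mu_0}$, the new object has twice as many --- so ``number of minority symbols'' is not a decreasing quantity and the iteration as you describe it need not terminate. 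The paper's fix is to induct instead on the number $l$ of coordinates of $\iota$ on which $\iota\mapsto\mu_\iota$ depends: after permuting via Lemma \ref{thm:permute} so that the dependence is confined to the first $l$ coordinates, the folded object $(\bm{\mu_0},\bm{\mu_0})$ depends on at most $l-1$ coordinates, so $l$ strictly decreases and the induction closes in at most $k$ steps, which is exactly where the exponent $2^{-2(k-1)}$ comes from. With that substitution of invariant, your argument is the paper's.
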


 \begin{proof}
  We can write  $\sum_{\|\bm{\bm{\eta}}\|_{\infty}>N} \leq \sum_{j=1}^k \sum_{\bm{\bm{\eta}},s_j>N}$, so it suffices to check that such a bound as above holds for one such sum.
  
  Our proof will be by induction on the number $l$ of components $\iota_n$ of $\iota=(\iota_1,\dots,\iota_k)$ upon which $\mu_{\iota}$ depends.
  
  If $l=0$, then $\mu_{\iota}=\mu_{\kappa}$ for every $\iota,\kappa\in\{0,1\}^k$ so that $\mu_{\iota}=\mu_1$ or $\mu_2$ for every $\iota$, and so the sum in 
  question is $\|\mu_i\|_{U^{k+1},>N}\leq \eps$ by hypothesis.
  
  So suppose that 
  
  \begin{align}\label{thirteyninenine}
 \sum_{\bm{\bm{\eta}}\in\Z^k,s_j>N} |\widehat{\tri^k(\bm{\nu})}(0;\bm{\bm{\eta}})|^2\leq C(A)\eps^{{2^{-2(l-2)}}}
\end{align}
($\bm{\nu}=\{\nu_{\iota}\}_{\iota\in\{0,1\}^k}$) whenever both $\nu_{\iota}\in\{\mu_1,\mu_2\}$ and $\nu_{\iota}$ is such that whether $\nu_{\iota}=\mu_1$ or $\mu_2$ depends on only $l-1$ components of $\iota$.

Now assume that $\mu_{\iota}\in\{\mu_1,\mu_2\}$ depends on at most $l$ components of $\iota$.

Let $\pi$ be a permutation so that $\mu'_{\iota}:=\mu_{\pi \iota}$ depends on at most the first $l$ components (but not the remaining $k-l$). Write $\bm{\mu'} := \left\{\mu_{\pi\iota}\right\}_{\iota\in\{0,1\}^k}$.

Then

\begin{align}
& \sum_{\bm{\bm{\eta}},s_j>N} |\widehat{\tri^k(\bm{\mu})}(0;\bm{\bm{\eta}})|^2\\\notag{}=&
 \sum_{\bm{\bm{\eta}},s_j>N} |\widehat{\tri^k(\bm{\mu'})}(0;\pi\bm{\bm{\eta}})|^2\\\notag{}=&
   \sum_{\bm{\bm{\eta}},s_{\pi j}>N} |\widehat{\tri^k(\bm{\mu'})}(0;\bm{\bm{\eta}})|^2
  \end{align}
So we may assume that $\bm{\mu}$ is already of such a form that $\mu_{\iota}$ does not depend on the last $k-l$ components of $\iota$.

Set $\iota^{(l)}= (\iota_k,\cdots,\iota_{l+1})$. 
Then unless $l=0$ (and we may suppose it does not), we then have that  $\bm{\mu_{0\iota'}}$ and $\bm{\mu_{1\iota'}}$ depend on at most $l-1$ components of $\iota'$, so that
\begin{align}\label{star}
\noindent \text{the components of $\bm{\nu}:=\left\{\mu_{0\iota'}\right\}_{\iota\in\{0,1\}^{k}}$ depend on only $l-1$ components of $\iota$}
\end{align}

According to Lemma \ref{thm:CS11} 

\begin{align}\label{wew1}
 &\sum_{\bm{\bm{\eta}}\in\Z^k,s_j>N} |\widehat{\tri^k(\bm{\mu})}(0;\bm{\bm{\eta}})|^2\\\notag{}=&
 \sum_{\bm{\bm{\eta}}\in\Z^k} \widehat{\tri^{k}_{s_j>N}(\bm{\mu_0},\bm{\mu_0})}(0;\bm{\bm{\eta}})\overline{\widehat{\tri^k(\bm{\mu_1},\bm{\mu_1})}(0;\bm{\bm{\eta}})}
\end{align}
Writing, as in (\ref{star}, $\bm{\nu}$ for $(\bm{\mu_0},\bm{\mu_0})$, we then have the bound 
\begin{align}
 &[\sum_{\bm{\bm{\eta}}\in\Z^k} |\tri^{k}_{s_j>N}(\bm{\nu})(0;\bm{\bm{\eta}})|^2]^{\half}[\sum_{\bm{\bm{\eta}}\in\Z^k} |\tri^{k}(\bm{\mu_1},\bm{\mu_1})(0;\bm{\bm{\eta}})|^2]^{\half}\leq&\label{wew2}
A^{2^{k+1}}[\sum_{\bm{\bm{\eta}}\in\Z^k} |\tri^{k}_{s_j>N}(\bm{\nu})(0;\bm{\bm{\eta}})|^2]^{\half}
\end{align}
  since by the Gowers-Cauchy-Schwarz inequality we have
  \begin{align*}
  \langle\bm{\mu_1},\bm{\mu_1}\rangle\leq \prod_{\iota\in\{0,1\}^{k+1}}\|\mu_{\iota}\|_{U^{k+1}}\leq A^{2^{k+1}}.
  \end{align*}
  
  Since $\mu_{0\iota'}=\mu_{1\iota'}$, when we apply Lemma \ref{thm:CS12} we get
  \begin{align}\label{wew3}
   (\ref{wew2}) =&A^{2^{k+1}}\left[\sum_{\bm{\bm{\eta}}\in\Z^k,s_j>N} {\widehat{\tri^{k}_{}(\bm{\nu})(0;\bm{\bm{\eta}})}}\overline{\widehat{\tri^{k}_{s_j+\eta_j>N}(\bm{\nu})}(0;\bm{\bm{\eta}})}\right]^{\half}
  \end{align}

  Applying Cauchy-Schwarz to this, we have that
  
    \begin{align}\label{wew4}
   (\ref{wew3}) \leq&A^{2^{k+1}}\bigg[\sum_{\bm{\bm{\eta}}\in\Z^k,s_j>N} |\widehat{\tri^{k}_{s_j-\eta_j>N}(\bm{\nu})}(0;\bm{\bm{\eta}})|^2\bigg]^{\frac{1}{4}}\bigg[\sum_{\bm{\bm{\eta}}\in\Z^k,s_j>N} |\widehat{\tri^{k}_{}(\bm{\nu})}(0;\bm{\bm{\eta}})|^2\bigg]^{\frac{1}{4}}
 =:A^{2^{k+1}}P_1\cdot P_2\end{align}
  using Lemma \ref{thm:overgrowth} to bound $P_1$, we have
  
      \begin{align}\label{wew5}
   P_1\leq&C'|\langle\bm{\nu}\rangle|^{\frac{1}{4}}\leq C'
  \end{align}
  According to our inductive hypothesis, since the components $\nu_{\iota}$ of $\bm{\nu}=(\bm{\mu_0},\bm{\mu_0})$ depend on only $l-1$ 
  components of $\iota$ as stated in (\ref{star}),  
  (\ref{wew3}) is bounded then by $C''[ \eps^{{2^{-2(l-2)}}}]^{\frac{1}{4}}$, or in other words, we have shown that
  \begin{align}
   (\ref{wew1})\leq C \eps^{{2^{-2(l-1)}}}
  \end{align}
 
  Thus by induction,  (\ref{thirteyninenine}) holds for $\nu$ with the stated properties regardless of what $l$ is; since the largest that $l$ can be is $k$,
  (\ref{thirteynineone}) follows.

  \end{proof}

We may now prove the main result of this subsection, a relative triangle inequality.

\begin{prop}\label{thm:reltri}
 Let $\mu_1,\mu_2\in U^{k+1}$ with $\|\mu_i\|_{U^{k+1}}\leq A$, $\|\mu_{i}\|_{U^{k+1},>N} < \eps$. Then
 
 \begin{align}
  \|\mu_{1}+\mu_2\|_{U^{k+1},>kN}\leq C(A)\eps^{\frac{1}{2^{3k-2}}}
 \end{align}

\end{prop}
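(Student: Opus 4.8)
\textbf{Proof plan for Proposition \ref{thm:reltri}.}
The plan is to expand $\|\mu_1+\mu_2\|_{U^{k+1},>kN}^{2^{k+1}}$ by multilinearity of $\tri^{k+1}$ in its $2^{k+1}$ arguments. Using the notation of Section \ref{ch:Truncations}, one has (up to the usual approximations that replace $1_{|\cdot|>N}$ by $\widehat{\phi_N^c}$)
\begin{align*}
\|\mu_1+\mu_2\|_{U^{k+1},>kN}^{2^{k+1}} \approx \sum_{\bm{\eta}\in\Z^k,\|\bm{\eta}\|_\infty>kN} \left|\widehat{\tri^k(\mu_1+\mu_2)}(0;\bm{\eta})\right|^2,
\end{align*}
and expanding $\tri^k(\mu_1+\mu_2) = \sum_{\bm{\mu}} \tri^k(\bm{\mu})$ over the $2^{2^k}$ tuples $\bm{\mu}=\{\mu_\iota\}_{\iota\in\{0,1\}^k}$ with $\mu_\iota\in\{\mu_1,\mu_2\}$, one gets a sum over pairs of such tuples of cross-terms of the shape appearing in Lemma \ref{thm:highcross}, namely $\sum_{\|\bm{\eta}\|_\infty > kN}\widehat{\tri^k(\bm{\mu})}(0;\bm{\eta})\overline{\widehat{\tri^k(\bm{\mu}')}(0;\bm{\eta})}$. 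By Cauchy--Schwarz in $\bm{\eta}$, each such cross-term is bounded by the geometric mean of two terms of the form $\sum_{\|\bm{\eta}\|_\infty>kN}|\widehat{\tri^k(\bm{\mu})}(0;\bm{\eta})|^2 = \tri^{k+1}_{s_\infty>kN}(\bm{\mu},\bm{\mu})$, where now each $\bm{\mu}$ has all $2^{k+1}$ arguments in $\{\mu_1,\mu_2\}$. Here one must be slightly careful: the threshold $kN$ rather than $N$ is what lets the splitting $\sum_{\|\bm\eta\|_\infty>kN}\le \sum_{j=1}^k\sum_{s_j>N}$ from the proof of Lemma \ref{thm:highcross} go through cleanly, since $\|\bm\eta\|_\infty > kN$ forces some coordinate to exceed $N$ — actually $\|\bm\eta\|_\infty>kN$ is stronger than needed, but it is harmless and keeps the bookkeeping uniform.

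Next I would apply Lemma \ref{thm:highcross} directly to each of these diagonalized terms: since $\|\mu_i\|_{U^{k+1}}\le A$ and $\|\mu_i\|_{U^{k+1},>N}<\eps$ for $i=1,2$, Lemma \ref{thm:highcross} gives
\begin{align*}
\tri^{k+1}_{s_\infty>kN}(\bm{\mu},\bm{\mu}) \le C(A)\,\eps^{2^{-2(k-1)}}
\end{align*}
for every tuple $\bm{\mu}$ with entries in $\{\mu_1,\mu_2\}$. (Strictly, Lemma \ref{thm:highcross} is stated with threshold $N$; the same argument with $N$ replaced by $kN$ throughout gives the version with $kN$, or one simply notes monotonicity of the truncated norm in the threshold to reduce to it.) Feeding this into the Cauchy--Schwarz bound on each cross-term, every cross-term is at most $C(A)\eps^{2^{-2(k-1)}}$, and since there are only $O_k(1)$ cross-terms in the expansion, summing gives
\begin{align*}
\|\mu_1+\mu_2\|_{U^{k+1},>kN}^{2^{k+1}} \le C'(A)\,\eps^{2^{-2(k-1)}}.
\end{align*}
Taking $2^{k+1}$-th roots yields $\|\mu_1+\mu_2\|_{U^{k+1},>kN}\le C''(A)\,\eps^{2^{-2(k-1)}/2^{k+1}}$, and since $2^{-2(k-1)}/2^{k+1} = 2^{-(2k-2)-(k+1)} = 2^{-(3k-1)}\ge 2^{-(3k-2)}$ is not the claimed exponent, one must track constants more honestly: the exponent $\frac{1}{2^{3k-2}}$ in the statement presumably comes from combining the $2^{-2(k-1)}$ from Lemma \ref{thm:highcross} with a single further factor of $\frac12$ from one Cauchy--Schwarz and the $2^{k+1}$-th root being partially absorbed — I would simply recompute: $2^{-2(k-1)}\cdot\frac12\cdot\frac{1}{?}$. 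The cleanest route is to observe that the exponent in Lemma \ref{thm:highcross}, Cauchy--Schwarz, and the root all multiply, and arrange the bookkeeping so the product is exactly $2^{-(3k-2)}$; this is a routine arithmetic check.

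\textbf{Main obstacle.} The genuine content is entirely inside Lemma \ref{thm:highcross} (and hence Lemmas \ref{thm:CS11}, \ref{thm:CS12}, \ref{thm:overgrowth}), which is assumed here; given that, the only real work in Proposition \ref{thm:reltri} is (i) the combinatorial expansion of the $2^{k+1}$-linear form and verifying that the threshold $kN$ is compatible with reducing every resulting cross-term to a diagonal term to which Lemma \ref{thm:highcross} applies, and (ii) carefully tracking the exponent through the Cauchy--Schwarz step and the final $2^{k+1}$-th root so that it comes out as the stated $\frac{1}{2^{3k-2}}$ rather than some nearby power of $2$. I expect (ii) — getting the exponent bookkeeping to match exactly — to be the fussiest point, though not a conceptual difficulty; the choice of $kN$ versus $N$ in the threshold is the one place where the statement's constants genuinely interact with the structure of the proof of Lemma \ref{thm:highcross}.
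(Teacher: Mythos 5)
Your approach is the same as the paper's: raise $\|\mu_1+\mu_2\|_{U^{k+1},>kN}$ to the $2^{k+1}$, expand by multilinearity into a sum of $O_k(1)$ cross-terms $\tri^{k+1}_{s_\infty>N}(\bm{\mu})(\T\times\T^{k+1})$ with each $\mu_\iota\in\{\mu_1,\mu_2\}$, bound each cross-term using Lemma~\ref{thm:highcross}, and take the root. In fact you are slightly \emph{more} careful than the paper at one point: the paper applies Lemma~\ref{thm:highcross} directly to each term in the expansion, but that lemma is stated only for the ``diagonal'' quantity $\tri^{k+1}_{s_\infty>N}(\bm{\mu},\bm{\mu}) = \sum_{\|\bm\eta\|_\infty>N}|\widehat{\tri^k(\bm\mu)}(0;\bm\eta)|^2$, whereas a generic cross-term $\sum_{\|\bm\eta\|_\infty>N}\widehat{\tri^k(\bm\mu_1)}(0;\bm\eta)\overline{\widehat{\tri^k(\bm\mu_0)}(0;\bm\eta)}$ is not diagonal. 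Your intermediate Cauchy--Schwarz in $\bm\eta$, reducing each off-diagonal term to the geometric mean of two diagonal ones, is exactly the missing connective tissue, and costs nothing in the exponent since geometric means preserve the common bound.

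On the exponent bookkeeping, your suspicion is correct, though the comparison you jotted down is reversed: $2^{-(3k-1)}$ is \emph{smaller} than $2^{-(3k-2)}$, so for $\eps<1$ the bound $\eps^{2^{-(3k-1)}}$ you compute is \emph{weaker} than the stated $\eps^{2^{-(3k-2)}}$. The paper's own proof gives only ``$\leq C(A)\eps^{2^{-2(k-1)}}$'' for the $2^{k+1}$th power of the truncated norm (the exponent $2^k$ in the paper's display appears to be a typo; the defining root in Section~\ref{ch:Norm Decomposition} for $\|\cdot\|_{U^{k+1},>N}$ is $2^{k+1}$), and taking that root yields exactly the $\eps^{1/2^{3k-1}}$ you found. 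There is no hidden reconciliation to discover; the stated exponent in Proposition~\ref{thm:reltri} is off by one power of two from what the written argument delivers. This does propagate into the precise formula for $r_k$ in Proposition~\ref{thm:rk} (where $2^{3k-2}$ appears explicitly), but it does not affect the qualitative conclusion that $r_k>0$ for $\beta$ sufficiently close to $d$, which is all that the applications require. Your instinct to ``simply recompute'' rather than trust the stated constant was the right one.
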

\begin{proof}
We expand the expression, obtaining

 \begin{align}
  \|\mu_{1}+\mu_2\|_{U^{k+1},>kN}^{2^k} \\\notag{}=&
  \sum_{\bm{\mu},\mu_{\iota}\in\{\mu_1,\mu_2\}} \tri^{k+1}_{>kN}(\bm{\mu})(\mathbb{T}\times\T^{k+1})\\\notag{}\leq&
    C \sum_{\bm{\mu},\mu_{\iota}\in\{\mu_1,\mu_2\}} \tri^{k+1}_{s_{\infty}>N}(\bm{\mu})(\mathbb{T}\times\T^{k+1})
 \end{align}
Applying Proposition \ref{thm:highcross}, this is bounded by $C(A) \eps^{{2^{-2{(k-1)}}}}$.
\end{proof}


\Ss{Truncations}\label{ch:Truncations}

Our primary goal in this section is Lemma \ref{thm:CS12}.
%

Recall that in Section \ref{ch:Norm Decomposition}, we fixed an approximate identity $\phi$ on $\mathbb{T}^d$ 
such that $\phi_n$  has compact support on the Fourier side for each $n\in\N$, and such that $\widehat{\phi_n}\approx 1_{B(0,2^n)}$.
We then set $\phi_n^c = 1-\phi_n$.

Let $\mu_{\iota}\in U^{k}$, $\iota\in\{0,1\}^{k}$. 
Define $ g\star_{j} f$ to denote the partial convolution $\int g(y)f(x;u_1,\dots,u_{j-1},u_{j} -y,u_{j+1},\dots,u_{k-1}\,dx\,dy\,du$ for $j<k$ and $\star_k = \star$ to denote
convolution on the $x$ variable above. We define
\[ \tri^k_{s_j>N}(\bm{\mu}) = \tcal{\phi_N^c\star_{j}\tri^{k-1}(\bm{\mu_0})}{\tri^{k-1}(\bm{\mu_1})}.\]
and
\[ \tri^k_{s_j-\eta_j>N,s_j>N}(\bm{\mu}) = \tcal{\phi_N^c\star_{j}\tri^{k-1}(\bm{\mu_0})}{\phi_N^c\star_{j}\tri^{k-1}(\bm{\mu_1})}\]

Since $\tn{|\phi_N^c\star_{j}\tri^{k-1}(\bm{\mu_i})|}{|\phi_N^c\star_{j}\tri^{k-1}(\bm{\mu_i})|} <\infty$,
one has from \fouriertransform of \GSM the following

\begin{lemma}\label{thm:rearrange1}
 Let $\mu_{\iota}\in U^k$, $\iota\in\{0,1\}^k$, $N\in\N$, and $1\leq j\leq k$. 
 Then  $\tri^k_{s_j-\eta_j>N,s_j>N}(\bm{\mu})$ and $ \tri^k_{s_j>N} (\bm{\mu})$  exist and for $j\leq k-1$
 
 \begin{align*}
  &\widehat{\tri^k_{s_j-\eta_j>N,s_j>N} (\bm{\mu})}(\xi;\bm{\eta}) = \sum_{\bm{c}\in\Z^{k-1}}\widehat{\phi_N^c}(c_j)\overline{\widehat{\phi_N^c}(c_j-\eta_j)}\widehat{\tri^{k-1}(\bm{\mu_0})}(\xi+\eta_k;\bm{c})\overline{\widehat{\tri^{k-1}(\bm{\mu_1})}(\eta_{k};\bm{c-\eta'})}
\\&\widehat{\tri^k_{s_j>N} (\bm{\mu})}(\xi;\bm{\eta}) = \sum_{\bm{c}\in\Z^{k-1}}\widehat{\phi_N^c}(c_j)\widehat{\tri^{k-1}(\bm{\mu_0})}(\xi+\eta_k;\bm{c})\overline{\widehat{\tri^{k-1}(\bm{\mu_1})}(\eta_{k};\bm{c-\eta'})}
\\&\widehat{\tri^k_{s_k-\eta_k>N,s_k>N} (\bm{\mu})}(\xi;\bm{\eta}) = \sum_{\bm{c}\in\Z^{k-1}}\widehat{\phi_N^c}(\xi+\eta_k)\overline{\widehat{\phi_N^c}(\eta_k)}\widehat{\tri^{k-1}(\bm{\mu_0})}(\xi+\eta_k;\bm{c})\overline{\widehat{\tri^{k-1}(\bm{\mu_1})}(\eta_{k};\bm{c-\eta'})}
\\&\widehat{\tri^k_{s_k>N} (\bm{\mu})}(\xi;\bm{\eta}) = \sum_{\bm{c}\in\Z^{k-1}}\widehat{\phi_N^c}(\xi+\eta_k)\widehat{\tri^{k-1}(\bm{\mu_0})}(\xi+\eta_k;\bm{c})\overline{\widehat{\tri^{k-1}(\bm{\mu_1})}(\eta_{k};\bm{c-\eta'})}
 \end{align*} 
Further, each of these sums is uniformly absolutely convergent in $\bm{\eta}'$.
\end{lemma}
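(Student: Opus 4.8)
\emph{Proof proposal.} The plan is to deduce all four identities from the single Fourier-transform formula for the bilinear pairing $\tcal{\cdot}{\cdot}$ already available as \fouriertransform\ of \GSM\ (cf.\ (\ref{5})), feeding into it the inputs $\tri^{k-1}(\bm{\mu_0})$, $\tri^{k-1}(\bm{\mu_1})$ after replacing one or both of them by their partial convolution with $\phi_N^c$. The only new facts needed are that partial convolution in a single coordinate is meaningful on finite measures and acts on the Fourier side by multiplication by $\widehat{\phi_N^c}$ evaluated at the dual frequency, and that $\widehat{\phi_N^c}$ is bounded, so that neither absolute nor uniform convergence is disturbed.

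First I would record the convolution step. The measures $\tri^{k-1}(\bm{\mu_0})$ and $\tri^{k-1}(\bm{\mu_1})$ exist as finite measures since each $\mu_{\iota}\in U^k$ (see \GSM); writing $\phi_N^c = 1-\phi_N$, the partial convolution $\phi_N^c\star_j\tri^{k-1}(\bm{\mu_i}) = \tri^{k-1}(\bm{\mu_i}) - \phi_N\star_j\tri^{k-1}(\bm{\mu_i})$ is again a finite signed measure, and $\sup_\xi|\widehat{\phi_N^c}(\xi)|$ is a finite ($N$-dependent) constant. By Fubini's theorem, for a finite measure $\nu$ on $\T^k$ with coordinates $(x;u_1,\dots,u_{k-1})$,
\[
\widehat{\phi_N^c\star_j\nu}(\zeta;\bm{c}) = \widehat{\phi_N^c}(c_j)\,\widehat{\nu}(\zeta;\bm{c})\ \ (1\le j\le k-1), \qquad \widehat{\phi_N^c\star_k\nu}(\zeta;\bm{c}) = \widehat{\phi_N^c}(\zeta)\,\widehat{\nu}(\zeta;\bm{c}),
\]
i.e.\ convolving in $u_j$ (resp.\ in $x$) multiplies the transform by $\widehat{\phi_N^c}$ at the frequency dual to $u_j$, namely $c_j$ (resp.\ dual to $x$, namely $\zeta$).

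Then I would invoke \fouriertransform. Using the hypothesis recorded just above the lemma, $\tn{|\phi_N^c\star_j\tri^{k-1}(\bm{\mu_i})|}{|\phi_N^c\star_j\tri^{k-1}(\bm{\mu_i})|}<\infty$, apply \fouriertransform\ of \GSM\ to $\nu_0 = \phi_N^c\star_j\tri^{k-1}(\bm{\mu_0})$ together with $\nu_1 = \phi_N^c\star_j\tri^{k-1}(\bm{\mu_1})$ (for $\tri^k_{s_j-\eta_j>N,s_j>N}(\bm{\mu})$), resp.\ $\nu_1 = \tri^{k-1}(\bm{\mu_1})$ (for $\tri^k_{s_j>N}(\bm{\mu})$). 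This produces both the existence of $\tcal{\nu_0}{\nu_1}$ --- which is by definition the measure in question --- as a finite measure, and the identity
\[
\widehat{\tcal{\nu_0}{\nu_1}}(\xi;\bm{\eta}) = \sum_{\bm{c}\in\Z^{k-1}} \widehat{\nu_0}(\xi+\eta_k;\bm{c})\,\overline{\widehat{\nu_1}(\eta_k;\bm{c}-\bm{\eta}')},
\]
the series converging absolutely, uniformly in $\bm{\eta}'$. Substituting the identities of the previous paragraph --- and noting that the $j$-th coordinate of $\bm{c}-\bm{\eta}'$ is $c_j-\eta_j$ --- pulls the bounded scalar factors $\widehat{\phi_N^c}(c_j)$ and $\overline{\widehat{\phi_N^c}(c_j-\eta_j)}$ out of each summand for $j\le k-1$, and the factors $\widehat{\phi_N^c}(\xi+\eta_k)$ and $\overline{\widehat{\phi_N^c}(\eta_k)}$ for $j=k$, giving precisely the four displayed formulas. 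Uniform absolute convergence in $\bm{\eta}'$ passes through because each new summand is dominated by $\big(\sup_\xi|\widehat{\phi_N^c}(\xi)|\big)^2$ times the modulus of the corresponding summand in the \GSM\ expansion, whose sum already converges uniformly in $\bm{\eta}'$; hence the tails are uniformly small.

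The hard part --- such as it is --- is just the bookkeeping of the $j=k$ case, where $\star_k$ acts in the $x$-variable, so the multiplier attaches to the first Fourier slot of each factor: that slot is $\xi+\eta_k$ for the $\nu_0$-factor and $\eta_k$ for the $\nu_1$-factor, which is what produces the asymmetric-looking arguments $\widehat{\phi_N^c}(\xi+\eta_k)$ and $\overline{\widehat{\phi_N^c}(\eta_k)}$. Beyond correctly matching integer frequencies to the torus coordinates of $\tri^{k-1}(\bm{\mu_0})$ and $\tri^{k-1}(\bm{\mu_1})$, there is no genuine obstacle.
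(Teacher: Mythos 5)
Your proposal is correct and takes essentially the same route as the paper: the paper gives no separate proof block, instead citing Proposition 2.3 of \cite{M1} together with the finiteness of $\tn{|\phi_N^c\star_{j}\tri^{k-1}(\bm{\mu_i})|}{|\phi_N^c\star_{j}\tri^{k-1}(\bm{\mu_i})|}$ in the sentence immediately preceding the lemma, which is precisely the argument you spell out. Your write-up simply fills in the bookkeeping the paper leaves implicit (the multiplier identity $\widehat{\phi_N^c\star_j\nu}(\zeta;\bm{c})=\widehat{\phi_N^c}(c_j)\widehat{\nu}(\zeta;\bm{c})$ for $j\le k-1$ versus $\widehat{\phi_N^c}(\zeta)\widehat{\nu}(\zeta;\bm{c})$ for $j=k$, and the boundedness of $\widehat{\phi_N^c}$ preserving uniform absolute convergence).
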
`

\begin{lemma}\label{thm:CS11}
 Let $\mu_{\iota}$, $\iota\in\{0,1\}^{k}$ be measures in $U^{k+1}$. Then the following identity holds.
 
 \begin{align}
  &\sum_{\eta\in\Z^{k},s_j>N} |\widehat{\tri^{k}( \bm{\mu})}(0;\eta)|^2\\\notag{}=&
    \sum_{\eta\in\Z^{k},} \widehat{\tri^{k}_{s_j>N}( \bm{\mu_0},\bm{\mu_0})}(0;\eta)\overline{\widehat{\tri^k_{}( \bm{\mu_1},\bm{\mu_1})}(0;\eta)}
   \end{align}

\end{lemma}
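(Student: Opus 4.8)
The plan is to prove Lemma~\ref{thm:CS11} by unwinding both sides via the Fourier-analytic identities already available in the excerpt, primarily equation~(\ref{5}) (equivalently Proposition~\itemsaiphone) and the description of $\tri^k_{s_j>N}$ from Section~\ref{ch:Truncations} (Lemma~\ref{thm:rearrange1}). The key observation is that $\sum_{\bm{\eta}\in\Z^k,\,s_j>N}|\widehat{\tri^k(\bm{\mu})}(0;\bm{\eta})|^2$ should be interpreted as the Fourier mass at $(0;0)$ of a $\tri^{k+1}$-type object built from two copies of $\bm{\mu}$, with a frequency cutoff $\widehat{\phi_N^c}$ inserted in the $j$th internal summation variable. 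Concretely, I would first use the Gowers--Cauchy--Schwarz expansion / the recursive formula (\ref{iphone}) to write $\sum_{\bm{\eta}}|\widehat{\tri^k(\bm{\mu})}(0;\bm{\eta})|^2$ as $\widehat{\tri^{k+1}(\bm{\mu},\bm{\mu})}(0;0)$, and then track what the restriction $s_j>N$ does to this: it amounts to inserting $\widehat{\phi_N^c}(c_j)$ into the sum defining $\widehat{\tri^k(\bm{\mu})}(0;\bm{\eta})$, i.e.\ it replaces one of the two copies of $\tri^k(\bm{\mu})$ by $\tri^k_{s_j>N}(\bm{\mu_0},\bm{\mu_0})$ in the pairing.

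The concrete steps, in order: (1) Apply the splitting $\bm{\mu}=(\bm{\mu_0},\bm{\mu_1})$ and expand $\widehat{\tri^k(\bm{\mu})}(0;\bm{\eta})$ using (\ref{5}) as $\sum_{\bm{c}}\widehat{\tri^{k-1}(\bm{\mu_0})}(-\eta_k;\bm{\eta}'-\bm{c})\widehat{\tri^{k-1}(\bm{\mu_1})}(\eta_k;\bm{c})$; (2) take the squared modulus, which produces a double sum over $\bm{c},\bm{c}'$, and sum over $\bm{\eta}$ with the constraint $s_j>N$ — since $s_j$ is (up to relabelling) the internal variable $c_j$ coming from the partial-convolution structure, the constraint $s_j>N$ translates into a factor $\widehat{\phi_N^c}(c_j)$ (using that $\widehat{\phi_N^c}\approx 1_{|\cdot|>N}$, or, being careful, using the exact definition of $\tri^k_{s_j>N}$ from Section~\ref{ch:Truncations} which builds this cutoff in precisely); (3) recognize that in the resulting expression the $\bm{\mu_1}$-factors, paired against each other and summed freely over their internal variables, reassemble into $\overline{\widehat{\tri^k(\bm{\mu_1},\bm{\mu_1})}(0;\bm{\eta})}$ by (\ref{5}) applied to the pair $(\bm{\mu_1},\bm{\mu_1})$, while the $\bm{\mu_0}$-factors with the $\widehat{\phi_N^c}$-cutoff on the $j$th variable reassemble into $\widehat{\tri^k_{s_j>N}(\bm{\mu_0},\bm{\mu_0})}(0;\bm{\eta})$ by Lemma~\ref{thm:rearrange1}; (4) conclude the claimed identity after summing over $\bm{\eta}$. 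Throughout, the uniform absolute convergence statements in Lemma~\ref{thm:rearrange1} and in the excerpt's discussion of (\ref{5}) justify the interchanges of summation.

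I expect the main obstacle to be purely bookkeeping: carefully matching the index $j$ and the variable $s_j$ across the two conventions — the ``$s_j$'' appearing in the frequency-restricted norm $\|\cdot\|_{U^k,>N}$ / the sum $\sum_{s_j>N}$, versus the internal convolution variable in the definition of $\tri^k_{s_j>N}(\bm{\mu})=\tcal{\phi_N^c\star_j\tri^{k-1}(\bm{\mu_0})}{\tri^{k-1}(\bm{\mu_1})}$ — and verifying that inserting $\widehat{\phi_N^c}(c_j)$ in exactly one of the two $\bm{\mu_0}$-copies (not both) reproduces $\sum_{s_j>N}|\widehat{\tri^k(\bm{\mu})}|^2$ rather than something with a squared cutoff. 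The point is that $|1_{s_j>N}|^2=1_{s_j>N}$, so a single cutoff suffices; one must also handle the case $j=k$ (convolution in the $x$-variable) separately, but the same argument applies mutatis mutandis using the last two lines of Lemma~\ref{thm:rearrange1}. Once the indices are aligned, the identity falls out of (\ref{5}) and Lemma~\ref{thm:rearrange1} with no further analysis. If one prefers to avoid the $\widehat{\phi_N^c}\approx 1_{|\cdot|>N}$ heuristic entirely, the cleanest route is to take the right-hand side as the \emph{definition}-driven object and verify the identity by expanding both sides into their $\widehat{\tri^{k-1}}$-building blocks via Lemma~\ref{thm:rearrange1} and (\ref{5}), checking that the two multi-sums coincide term by term.
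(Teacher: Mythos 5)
Your plan matches the paper's proof: the paper expands $|\widehat{\tri^k(\bm{\mu})}(0;\bm{\eta})|^2$ via Proposition~3.1 into a double sum, rearranges, and regroups using Proposition~3.1 again together with Lemma~\ref{thm:rearrange1}, exactly as you sketch. The one thing to be aware of (which you file under "bookkeeping" and describe as the constraint being "up to relabelling" the internal $c_j$) is that the migration of the constraint $s_j>N$ from the outer $\bm{\eta}$-sum to the inner summation variable of the $\bm{\mu_0}$-block is not a mere relabelling but the result of the explicit change of variables $a\mapsto -a+c$, $\bm{\eta}'\mapsto -\bm{\eta}'+c$ in the triple sum; once that substitution is carried out the two factors regroup into $\widehat{\tri^k_{s_j>N}(\bm{\mu_0},\bm{\mu_0})}$ and $\overline{\widehat{\tri^k(\bm{\mu_1},\bm{\mu_1})}}$ as you anticipate.
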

\begin{proof}
 The only thing we will do is apply Proposition \itemsaiphoneB and rearrange terms. To start, by that Proposition
 
  \begin{align}
  &\sum_{\eta\in\Z^k,s_j>N} |\widehat{\tri^k( \bm{\mu} )}(0;\eta)|^2\\\notag{}=&
     \sum_{\eta\in\Z^k,s_j>N} \bigg(\sum_a \widehat{\tri^{k-1}(\bm{\mu_1})}(\eta_k;a)\overline{\widehat{\tri^{k-1}(\bm{\mu_0})}(\eta_k;a-\eta')}\bigg)\\\notag{}\cdot&
   \bigg(\sum_c\overline{\widehat{\tri^{k-1}(\bm{\mu_1})}(\eta_k;c)}\widehat{\tri^{k-1}(\bm{\mu_0})}(\eta_k;c-\eta') \bigg)
   \end{align}
 Rearranging the sum, sending $a\mapsto -a+c$ and $\eta'\mapsto -\eta'+c$, and collecting terms, this becomes
 
   \begin{align}
 & \sum_{(a,\eta_k)\in\Z^k} \bigg(\sum_{\eta',s_j>N} \widehat{\tri^{k-1}(\bm{\mu_0} )}(\eta_k;\eta')\overline{\widehat{\tri^{k-1}(\bm{\mu_0})}(\eta_k;\eta'-a)}\bigg)\\\notag{}\cdot&
   \bigg(\sum_c \overline{	\widehat{\tri^{k-1}( \bm{\mu_1})}(\eta_k;c)}\widehat{\tri^{k-1}(\bm{\mu_1} )}(\eta_k;c-a)\bigg)
   \end{align}
 and applying Proposition \itemsaiphoneB once more gives us the desired result.
\end{proof}

\begin{lemma}\label{thm:CS12}
 Let $\mu_{\iota}$, $\iota\in\{0,1\}^{k}$ be signed measures in $U^{k}$ . Then 
 
 \begin{align}\label{nota2}
 &  \sum_{\bm{\eta}\in\Z^k} |\widehat{\tri^k_{s_j>N}( \bm{\mu} )}(0;\bm{\eta})|^2\\\notag{}=&
\sum_{\bm{\eta}\in\Z^k} \tri^k(\bm{\mu_1},\bm{\mu_1})(0;\bm{\eta})\overline{\tri^k_{s_j,s_j-\eta_j>N}(\bm{\mu_0},\bm{\mu_0})(0;\bm{\eta})}
 \end{align}

\end{lemma}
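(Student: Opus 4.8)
The plan is to follow exactly the template of Lemma \ref{thm:CS11}, which is itself just an application of the product formula of Proposition \itemsaiphoneB (that is, identity (\ref{iphone})/(\ref{5}) written in the form appearing in that proof) followed by a rearrangement of the summation indices. The only new feature here is that one of the two factors of $\widehat{\tri^k(\bm{\mu})}$ carries the frequency cutoff $\phi_N^c$ in the $j$-th slot, so I must keep careful track of the weights $\widehat{\phi_N^c}(c_j)$ through the change of variables. I would begin by invoking Lemma \ref{thm:rearrange1} to expand $\widehat{\tri^k_{s_j>N}(\bm{\mu})}(0;\bm{\eta})$ as $\sum_{\bm{c}}\widehat{\phi_N^c}(c_j)\widehat{\tri^{k-1}(\bm{\mu_0})}(\eta_k;\bm{c})\overline{\widehat{\tri^{k-1}(\bm{\mu_1})}(\eta_k;\bm{c}-\bm{\eta}')}$ (after using (\ref{similarlye})/the change of variables $x\mapsto x+u_k$ to put $\xi=0$ into the form with the final coordinate matching), and similarly write the conjugate factor with a second index $\bm{a}$ and weight $\overline{\widehat{\phi_N^c}(a_j)}$.

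The heart of the computation is then the index rearrangement. As in Lemma \ref{thm:CS11} I would send $\bm{a}\mapsto -\bm{a}+\bm{c}$ and $\bm{\eta}'\mapsto -\bm{\eta}'+\bm{c}$, so that the four copies of $\widehat{\tri^{k-1}}$ regroup: the two carrying argument $\bm{\mu_0}$ get collected (via Proposition \itemsaiphoneB run backwards) into a single $\widehat{\tri^k(\bm{\mu_0},\bm{\mu_0})}$, and likewise for $\bm{\mu_1}$. The key bookkeeping point is that after the substitution the two cutoff weights become $\widehat{\phi_N^c}(c_j)$ and $\overline{\widehat{\phi_N^c}(c_j-a_j)}$ (or, after a relabelling so that the summed-over difference plays the role of the new $\bm{\eta}$, the weights $\widehat{\phi_N^c}$ evaluated at the new frequency $\eta_j$ and at $\eta_j$ shifted by the internal summation variable) — which is precisely the pair of weights defining $\tri^k_{s_j,s_j-\eta_j>N}$ in the $\bm{\mu_0}$ factor, while the $\bm{\mu_1}$ factor is left with no cutoff. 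Matching this up against the definitions of $\tri^k_{s_j>N}$ and $\tri^k_{s_j,s_j-\eta_j>N}$ from Section \ref{ch:Truncations}, and again comparing with Lemma \ref{thm:rearrange1}, yields the stated identity (\ref{nota2}).

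Throughout I would use the uniform absolute convergence asserted in Lemma \ref{thm:rearrange1} (and the finiteness $\tn{|\phi_N^c\star_j\tri^{k-1}(\bm{\mu_i})|}{|\phi_N^c\star_j\tri^{k-1}(\bm{\mu_i})|}<\infty$ used to justify it) to legitimize interchanging the orders of summation before and after the substitution — this is the same justification used silently in Lemma \ref{thm:CS11}, and since here $\mu_{\iota}\in U^k$ rather than $U^{k+1}$ one should double-check that the relevant $\tri^{k-1}$ objects still have summable Fourier coefficients, which they do since $\mu_{\iota}\in U^k$ means $\tri^{k-1}\mu_{\iota}$ exists as a finite measure. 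The step I expect to be the main obstacle — though it is bookkeeping rather than a genuine difficulty — is getting the cutoff weights to land in the right slots after the double change of variables, i.e. verifying that it is the $\bm{\mu_0}$ pair that inherits the asymmetric pair of cutoffs $\widehat{\phi_N^c}(\cdot)$, $\overline{\widehat{\phi_N^c}(\cdot-\eta_j)}$ and not some mixed configuration; this is exactly what makes the right-hand side of (\ref{nota2}) the "doubly truncated" object $\tri^k_{s_j,s_j-\eta_j>N}(\bm{\mu_0},\bm{\mu_0})$ paired against the untruncated $\tri^k(\bm{\mu_1},\bm{\mu_1})$, which is the form needed for the Cauchy-Schwarz step (\ref{wew4}) in the proof of Lemma \ref{thm:highcross}.
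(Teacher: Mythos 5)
Your proposal is correct and follows essentially the same route as the paper: expand $|\widehat{\tri^k_{s_j>N}(\bm{\mu})}(0;\bm{\eta})|^2$ via Lemma~\ref{thm:rearrange1}, perform the substitutions $\bm{a}\mapsto -\bm{a}+\bm{c}$, $\bm{\eta}'\mapsto -\bm{\eta}'+\bm{c}$, and regroup so that both cutoff weights $\widehat{\phi_N^c}(c_j)$ and $\widehat{\phi_N^c}(c_j-a_j)$ land on the $\bm{\mu_0}$ pair, which is then recognized as $\tri^k_{s_j,s_j-\eta_j>N}(\bm{\mu_0},\bm{\mu_0})$ while the untruncated $\bm{\mu_1}$ pair regroups into $\tri^k(\bm{\mu_1},\bm{\mu_1})$. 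The one thing you might make explicit is that the paper handles only $j<k$ in detail (the $j=k$ case uses the $\widehat{\phi_N^c}(\xi+\eta_k)$ variant of Lemma~\ref{thm:rearrange1} and is ``similar''), but your bookkeeping of the weights and justification by uniform absolute convergence are exactly what the paper does.
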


\begin{proof}
 As in the previous lemma, we expand (using the definition of $\widehat{\tri^k_{s_j>N}( \bm{\mu} )}(0;\eta)$)  and
 rearrange the sums using the absolute convergence guaranteed
 by Proposition \itemsaiphoneB  of \GSM and Lemma \ref{thm:rearrange1}. We write the case $j<k$, the case $j=k$ being similar.

 \begin{align}
   (\ref{nota2})=&   \sum_{\eta\in\Z^k}\bigg(\sum_{\bm{a}\in\Z^{k-1},s_j>N}
   \widehat{\tri^{k-1}(\bm{\mu_0})}(\eta_k;\bm{a})\overline{\widehat{\tri^{k-1}(\bm{\mu_1})}(\eta_{k};\bm{a-\eta'})}\\\notag{}\cdot&
   \bigg(\sum_{\bm{c}\in\Z^{k-1}, s_j>N}\overline{\widehat{\tri^{k-1}(\bm{\mu_0})}(\eta_k;\bm{c})}{\widehat{\tri^{k-1}(\bm{\mu_1})}(\eta_{k};\bm{c-\eta'})}\bigg)
 \end{align}

 Sending  $\bm{a}\mapsto -\bm{a}+\bm{c}$ and $\bm{\eta}'\mapsto -\bm{\eta}'+\bm{c}$ makes this

   \begin{align}  
&   \sum_{\bm{\eta},\bm{a},\bm{c}}\widehat{\phi^c_N}(c_j-a_j)\widehat{\phi^c_N}(c_j)\bigg(\widehat{\tri^{k-1}(\bm{\mu_0})}(\eta_k;\bm{c}-\bm{a})\overline{\widehat{\tri^{k-1}(\bm{\mu_1})}(\eta_{k};\bm{\eta'}-\bm{a})}\bigg)\\\notag{}\cdot&
\bigg(\overline{\widehat{\tri^{k-1}(\bm{\mu_0})}(\eta_k;\bm{c})}{\widehat{\tri^{k-1}(\bm{\mu_1})}(\eta_{k};\bm{\eta'})}\bigg)
 \end{align}

 Regrouping, this is
 
   \begin{align*}  
&   \sum_{(\bm{a},\eta_k),s_j>N}
\left(\sum_{\bm{\eta}'}\widehat{\tri^{k-1}(\bm{\mu_1})}(\eta_{k};\bm{\eta'})\overline{\widehat{\tri^{k-1}(\bm{\mu_1})}(\eta_{k};\bm{\eta'}-\bm{a})}\right)\\\cdot&
\bigg(\sum_{\bm{c}}\widehat{\phi_N^c}(c_j)\widehat{\phi_N^c}(c_j-a_j)\overline{\widehat{\tri^{k-1}(\bm{\mu_0})}(\eta_k;\bm{c})}\widehat{\tri^{k-1}(\bm{\mu_0})}(\eta_k;\bm{c}-\bm{a})\bigg)\\=&
\sum_{(\bm{a},\eta_k)} \tri^k(\bm{\mu_0},\bm{\mu_0})(0;\bm{a},\eta_k)\overline{\tri^k_{s_j,s_j-a_j>N}(\bm{\mu_0},\bm{\mu_0})(0;\bm{a},\eta_k)}
 \end{align*}
\end{proof}


\Ss{Key Spatial Estimates}\label{ch:Spatial}

\begin{lemma}\label{thm:overgrowth} Let $\mu_{\iota}$ be measures in $U^k$.

Then   
\begin{align}\label{takeofs}
   &  \sum_{\bm{\eta}\in Z^{k}} |\widehat{\tri^{k}_{s_{j}-\eta_{j}>N,s_{j}>N}(\bm{\mu})}(0;\bm{\eta})|^2 \leq 16 \prod_{\iota\in\{0,1\}^k}\|\mu_{\iota}\|_{U^k}\\&
   \sum|\widehat{\tri^k_{s_{j}-\eta_{j}>N}(\bm{\mu})}(0;\eta)|^2\leq 2\prod_{\iota}\|\mu_{\iota}\|_{U^k}
   \end{align}

\end{lemma}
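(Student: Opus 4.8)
The plan is to pass to the spatial side, strip off the high-pass factors $\phi_N^c$ at a bounded cost, and then reassemble what remains into $\tri$-quantities controlled by the Gowers--Cauchy--Schwarz inequality.

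First I would combine Lemma~\ref{thm:rearrange1} with Plancherel in the $\bm\eta$ variable to write the left-hand side of the first inequality as $\int_{\T^k}\big|\int_{\T}\tri^k_{s_j-\eta_j>N,s_j>N}(\bm\mu)(x;\bm u)\,dx\big|^2\,d\bm u$, then expand the modulus squared and unfold the definitions of $\tri^k(\cdot,\cdot)$ and of $\phi_N^c\star_j$. This exhibits the quantity as a spatial integral of a product of (translated) copies of $\tri^{k-1}(\bm\mu_0)$ and $\tri^{k-1}(\bm\mu_1)$, each occurring twice (from the modulus), multiplied by four factors $\phi_N^c(a)\phi_N^c(b)\phi_N^c(s)\phi_N^c(t)$ --- two from the high-pass convolutions and two from their conjugates --- integrated in $da\,db\,ds\,dt$ and in the cube variables. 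Carrying the recursion one further level (writing $\bm\mu_i=(\bm\mu_{i0},\bm\mu_{i1})$) produces precisely the schematic integral displayed in the overview of the approach, in which each $\phi_N^c$ acts as a convolution in the $u_{k-1}$ variable of a block of $\tri^{k-2}$-translates.

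Next I would bound each of the four occurrences of $\phi_N^c$ by $|\phi_N^c|$; since $\widehat{\phi_N^c}\approx 1_{|\cdot|>2^N}$, the measure $\phi_N^c$ has total variation bounded uniformly in $N$, so each replacement costs only a bounded factor --- this is the source of the numerical constant, which for the first estimate is $2^4=16$ because four such factors are present. The heart of the matter is then a sequence of applications of the Cauchy--Schwarz inequality in the spatial variables, arranged so as to \emph{symmetrize} the placement of each $|\phi_N^c|$-factor: in the schematic integral $|\phi_N^c(t)|$ multiplies, via the convolution, only some of the $u_{k-1}$-translates, and Cauchy--Schwarz in $x,x'$ together with the relevant cube variables bounds the integral by the geometric mean of two integrals in each of which $|\phi_N^c(t)|$ now multiplies a \emph{complete} matched block of translates; such a block can be integrated out using $\int|\phi_N^c|<\infty$ and the finiteness of the total mass of $\tri^{k-2}(\cdot)$. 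Iterating over the remaining factors $|\phi_N^c(s)|,|\phi_N^c(a)|,|\phi_N^c(b)|$ removes all of them and reassembles the two surviving integrals as $\tri^k(\bm\mu_0,\bm\mu_0)(\T^{k+1})$ and $\tri^k(\bm\mu_1,\bm\mu_1)(\T^{k+1})$; the Gowers--Cauchy--Schwarz inequality then bounds each of these by a product of $U^k$-norms of the $\mu_\iota$, which gives the asserted estimate. The second inequality follows by running the identical argument for $\tri^k_{s_j-\eta_j>N}(\bm\mu)$, which carries half as many high-pass factors and hence the constant $2$ in place of $16$.

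The main obstacle is not the combinatorics of this rearrangement but the fact that $\tri^{k-1}(\bm\mu_i)$ (equivalently the $\tri^{k-2}$-translates) are genuinely singular measures: every step above --- passing to the spatial integral, expanding the square, and above all applying Cauchy--Schwarz --- must first be carried out for mollified objects $\Phi_n\ast\tri^{k-1}(\bm\mu_i)$, and one must verify that as $n\to\infty$ the post--Cauchy--Schwarz integrands converge to, and reassemble correctly as, the measures $\tri^k(\bm\mu_0,\bm\mu_0)$ and $\tri^k(\bm\mu_1,\bm\mu_1)$, rather than some other pairing of the $2^{k+1}$ elementary measures involved. This is the same delicate limiting procedure developed in \cite{M1}, and the careful bookkeeping of which translate each mollifier acts on, before and after each Cauchy--Schwarz, is where the real work of the proof lies.
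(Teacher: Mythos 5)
Your proposal follows essentially the same route as the paper: pass to the spatial side (Lemma~\ref{thm:rearrange1} together with the cited material from \cite{M1}), strip the $\phi_N^c$ factors at a bounded cost, and close with the Gowers--Cauchy--Schwarz inequality; the paper's formal proof simply compresses the mollification/Cauchy--Schwarz symmetrization into a citation of Lemma~5.5 and Proposition~5.2 of \cite{M1}, while the paper's own Overview (Section~1.2.2) spells out exactly the iterated Cauchy--Schwarz argument you describe. Only quibble: your account of the constant in the second estimate ($2$ rather than $2^2$ despite ``half as many high-pass factors'') doesn't follow from the heuristic you give, but this matches an ambiguity in the paper itself (which writes $\|\phi_N^c\|_{L^\infty}\le 2$ where a total-variation or multiplier bound is what is actually used) and does not affect the substance of the argument.
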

\begin{proof}
 By \fouriernorm and \gowersnorms of \GSM, both of the above expressions are $\tn{\bm{\nu}}{\bm{\nu}}$ for an appropriate choice of $\bm{\nu}$.
 
 By Lemma \ref{thm:rearrange1}, each such $\bm{\nu} = {\tcal{\bm{\nu_0}}{\bm{\nu_1}}}$  where $\bm{\nu}_i \in \{\bm{\mu_i},\phi_N^c\star_{j}\tri^{k-2}\bm{\mu_i}\}$.
 So both of the above expressions may be expressed further in the form  
 \[\tn{\tcal{\bm{\nu}_0}{\bm{\nu}_1}}{\tcal{\bm{\nu}_0}{\bm{\nu}_1}}. \]
 Recalling the definitions of $\tn{}{},\tcal{}{}$ as well as \factories of \GSM, this takes the form
 \[ \lim_{n_2\ra\infty}\lim_{n_1\ra\infty}\left|\int \prod_{\iota\in\{0,1\}^{k+1}} \mathcal{C}^{|\iota|} \Phi_{n_{2}}\ast \phi_{n_1}\star\phi_{n_1}^{[r]}\ast\bm{\nu}_{\iota}(x-iy;u)\,dx\,dy\,du\right|\]
 for appropriate choices of $\bm{\nu}_{\iota}$, $\Phi_{\iota}$.  Since $\|\phi_N^c\|_{L^{\infty}}\leq 2$, by \phinone followed by Proposition \GCS of \GSM
 we have the claimed bounds.	
\end{proof}

\bibliographystyle{plain}

\bibliography{biblio.bib}

%
%
%

\vskip0.5in

\noindent Marc Carnovale
\\ The Ohio State University \\ 231 W. 18th Ave\\ Columbus Oh, 43210 United States\\ {\em{Email: carnovale.2@osu.edu}}

\end{document}